\newcommand{\fulltitle}{Generalized linearization techniques in electrical impedance tomography}
\newcommand{\shorttitle}{generalized linearization in eit}
\newcommand{\N}{\mathbf{N}}
\newcommand{\R}{\mathbf{R}}
\newcommand{\C}{\mathbf{C}}
\newcommand{\E}{\mathbf{E}}
\DeclareMathOperator*{\argmin}{arg\,min}
\DeclareMathOperator*{\inv}{inv}
\newtheorem{theorem}{Theorem}
\newtheorem{proposition}{Proposition}
\newtheorem{corollary}{Corollary}
\newtheorem{lemma}{Lemma}
\newtheorem{definition}{Definition}
\newtheorem{example}{Example}
\newtheorem{remark}{Remark}
\begin{document}

\title{\fulltitle}
\author{Nuutti Hyv\"{o}nen\footnote{Aalto University, Department of Mathematics and Systems Analysis, P.O.\ Box 11100, FI-00076 Aalto, Finland (nuutti.hyvonen@aalto.fi).} \and Lauri Mustonen\footnote{Aalto University, Department of Mathematics and Systems Analysis, P.O.\ Box 11100, FI-00076 Aalto, Finland (lauri.mustonen@aalto.fi).}}
\date{May 30, 2017}

\maketitle

\begin{abstract}
Electrical impedance tomography aims at reconstructing the interior electrical conductivity from surface measurements of currents and voltages.
As the current-voltage pairs depend nonlinearly on the conductivity, impedance tomography leads to a nonlinear inverse problem.
Often, the forward problem is linearized with respect to the conductivity and the resulting linear inverse problem is regarded as a subproblem in an iterative algorithm or as a simple reconstruction method as such.
In this paper, we compare this basic linearization approach to linearizations with respect to the resistivity or the logarithm of the conductivity.
It is numerically demonstrated that the conductivity linearization often results in compromised accuracy in both forward and inverse computations.
Inspired by these observations, we present and analyze a new linearization technique which is based on the logarithm of the Neumann-to-Dirichlet operator.
The method is directly applicable to discrete settings, including the complete electrode model.
We also consider Fr\'echet derivatives of the logarithmic operators.
Numerical examples indicate that the proposed method is an accurate way of linearizing the problem of electrical impedance tomography.

\bigskip

\noindent{\it Keywords\/}: electrical impedance tomography, inverse elliptic boundary value problems, linearization, Neumann-to-Dirichlet map, operator logarithm, Fr\'echet differentiability

\noindent{\it AMS subject classifications\/}: 65N21, 47B15, 35R30
\end{abstract}

\section{Introduction}

The reconstruction task of electrical impedance tomography (EIT) is undoubtedly one of the most studied nonlinear inverse boundary value problems \cite{Uhlmann09}.
The mathematical foundations of the problem were introduced by Calder{\'o}n in \cite{Calderon80}, which considers the conductivity reconstruction based on the Dirichlet-to-Neumann operator.
As has become a more standard numerical approach, in this paper we consider the Neumann-to-Dirichlet operator, which maps the boundary currents to the corresponding boundary potentials for a given interior conductivity.
The dependence of the Neumann-to-Dirichlet operator on the conductivity is nonlinear so that reconstructing the conductivity from the measurements is a nonlinear, and also illposed, inverse problem.

Nonlinear inverse problems can be straightforwardly approached with nonlinear least squares minimization algorithms, which rely on successive linearizations of the forward operator. In particular, the accuracy of the sequential linearizations certainly affects the performance of such an iterative reconstruction method.
In EIT, one-step linearization can also be used to obtain an approximative reconstruction \cite{Adler09,Cheney90,Harrach12}.
The Fr\'echet derivatives of the EIT forward operator with respect to the conductivity can be computed explicitly \cite{Cheney90,Kaipio00,Lechleiter08b}.
By applying the chain rule of Banach spaces, or via direct computation, it is possible to consider derivatives with respect to the resistivity or the logarithm of the conductivity as well.
In this paper, we compare the linearization errors resulting from these different input parametrizations of the electrical properties.
We also discuss the corresponding methods for the complete electrode model (CEM), which is an accurate model for practical EIT measurements \cite{Cheng89,Somersalo92}.

As the main novelty, we introduce the logarithm of the Neumann-to-Dirichlet operator and use its differentiability properties to construct a new linearization method for EIT.
Loosely speaking, the traditional forward operator is replaced by an operator that maps the logarithm of the conductivity to the logarithm of the boundary potentials, the latter being understood in a sense of linear operators.
The corresponding least squares inversion algorithm then fits the computed logarithmic potentials to the logarithm of the measurement matrix.
Although this logarithmic forward operator is still nonlinear, numerical experiments show that the resulting linearization errors are in most cases smaller than with any other considered linearization method.

This paper is organized as follows.
In Section~\ref{sec:param}, the continuum forward model of EIT and the related Neumann-to-Dirichlet operator are reviewed.
We write the dependence of the Neumann-to-Dirichlet operator on the electrical properties in three different ways and recall also the corresponding Fr\'echet derivatives.
Section~\ref{sec:log} presents the formal definition for the logarithm of the Neumann-to-Dirichlet operator and studies its differentiability and other properties as an unbounded operator on the space of square-integrable functions.
In Section~\ref{sec:cem}, observations from Sections \ref{sec:param} and \ref{sec:log} are generalized for the complete electrode model.
Numerical experiments concerning linearization errors for both forward and inverse computations are given in Section~\ref{sec:numerics}.
Finally, Section~\ref{sec:discussion} offers concluding remarks.

\section{Input parametrization in EIT}
\label{sec:param}

The continuum forward model of EIT for the electric potential $u$ is written as
  \begin{align}
  \label{eq:eitcont}
  \nabla \cdot (\sigma \nabla u) &= 0 \qquad \text{in } \varOmega, \\
  \sigma \frac{\partial u}{\partial \nu} &= f \qquad \text{on } \partial \varOmega,
  \end{align}
where $\varOmega \subset \R^d$, $d \geq 2$, is a bounded domain with a Lipschitz boundary $\partial \varOmega$ and a connected complement, the electrical conductivity $\sigma \in L_+^\infty(\varOmega)$ is real-valued and isotropic, and $f \in H_\diamond^{-1/2}(\partial \varOmega)$ can be complex-valued.
The conductivity is bounded from below by a positive constant, that is,
  \begin{equation}
  \label{eq:linfplus}
  L_+^\infty(\varOmega) \coloneqq \left\{ v \in L^\infty(\varOmega; \R) : v \geq a \text{ a.e.~in } \varOmega \text{ for some } a>0 \right\},
  \end{equation}
and the boundary current density belongs to the mean-free Sobolev space $H_\diamond^{-1/2}(\partial \varOmega)$ defined via
  \begin{equation}
  \label{eq:l2diam}
  H_\diamond^{r}(\partial \varOmega) \coloneqq \left\{ v \in H^{r}(\partial \varOmega) : \langle 1, v\rangle = 0 \right\}, \qquad r \in (-1,1),
  \end{equation}
due to the conservation of electric charge. Here and in what follows, the bracket $\langle \cdot, \cdot \rangle \colon H^{-r}(\partial \varOmega)\times H^{r}(\partial \varOmega) \to \C$ denotes the {\em sesquilinear} dual pairing that has an interpretation as an extension of the  $L^2(\partial \varOmega)$ inner product $(\cdot, \cdot) \colon L^2(\partial \varOmega) \times L^2(\partial \varOmega) \to \C$. In particular, apart from $L^\infty(\varOmega) := L^\infty(\varOmega; \R)$, the multiplier field for all employed function spaces is $\C$.

The variational form of the Neumann problem \eqref{eq:eitcont} is to find $u \in H^1(\varOmega)$ such that
  \begin{equation}
  \label{eq:varcont}
  \int_\varOmega \sigma \nabla u \cdot \nabla \overline{v} \, {\rm d} x = \big\langle f, v|_{\partial \varOmega} \big\rangle
  \end{equation}
holds for all $v \in H^1(\varOmega)$.
The standard theory for elliptic partial differential equations states that there exists a unique solution for \eqref{eq:varcont} in the quotient space $H^1(\varOmega) / \mathbf{C}$  for any given current density $f \in H^{-1/2}_\diamond(\varOmega)$. In particular, there is a unique mean-free boundary potential
  \begin{equation}
  \label{eq:ubnd}
  U \coloneqq u|_{\partial \varOmega} \in H_\diamond^{1/2}(\partial \varOmega)
  \end{equation}
that depends linearly and boundedly on the corresponding $f \in H^{-1/2}_\diamond(\varOmega)$.~\cite{Grisvard85}

The linear map $f \mapsto U$, which obviously depends on $\sigma$, is called the Neumann-to-Dirichlet operator and is denoted by $\varLambda(\sigma)$. For any given $\sigma \in L_+^\infty(\varOmega)$, the mapping $$\varLambda(\sigma) \colon H_\diamond^{-1/2}(\partial \varOmega) \to H_\diamond^{1/2}(\partial \varOmega)$$ is a linear isomorphism that is self-adjoint in the sense that
\begin{equation}
\label{eq:sadjoint}
\langle f, \varLambda(\sigma) g \rangle = \overline{\langle g, \varLambda(\sigma) f \rangle} \qquad {\rm for} \ {\rm all} \ f, g \in H^{-1/2}_{\diamond}(\partial \varOmega) 
\end{equation}
and positive,
\begin{equation}
\label{eq:positivity}
\langle f, \varLambda(\sigma) f \rangle \geq c \| f \|_{H^{-1/2}(\partial \varOmega)}^2 \qquad {\rm for} \ {\rm all} \ f \in H^{-1/2}_{\diamond}(\partial \varOmega) \ {\rm and} \ {\rm some} \ c>0, 
\end{equation}
as can be easily deduced from \eqref{eq:varcont} and (Neumann) trace theorems for those elements of $H^1(\varOmega)/\mathbf{C}$ for which the range of $\nabla \cdot \sigma \nabla(\cdot)$ is a subspace of $L^2(\varOmega)$ (cf.,~e.g.,~\cite[p.~381, Lemma 1]{Dautray88}).
Unless stated otherwise, we interpret $\varLambda(\sigma)$ as a self-adjoint operator from $L^2_{\diamond}(\partial \varOmega)$ to itself, that is, as a map
$$
\varLambda(\sigma) \colon f \mapsto U, \quad L^2_{\diamond}(\partial \varOmega) \to L^2_{\diamond}(\partial \varOmega),
$$
which is compact as are the (dense) embeddings $H_\diamond^{1/2}(\partial \varOmega) \hookrightarrow L^2_{\diamond}(\partial \varOmega) \hookrightarrow H_\diamond^{-1/2}(\partial \varOmega)$. In particular,  $\varLambda(\sigma)$  admits a spectral decomposition
  \begin{equation}
  \label{eq:spectral}
  \varLambda(\sigma) f = \sum_{k=1}^\infty \lambda_k (f, \phi_k) \, \phi_k,
  \end{equation}
where the eigenvalues satisfy $\lambda_{k} \geq \lambda_{k+1}$ and $\R_+ \ni \lambda_k \to 0$ as $k \to \infty$, and the corresponding eigenfunctions $\{\phi_k\}_{k=1}^\infty$ form an orthonormal basis for $L_\diamond^2(\partial \varOmega)$.

Let us denote by $\mathcal{L}(L_\diamond^2(\partial \varOmega))$ the Banach space of bounded linear operators from $L_\diamond^2(\partial \varOmega)$ to itself. The mapping
\begin{equation}
\label{eq:lambda}
\varLambda \colon \sigma \mapsto \varLambda(\sigma), \quad L_+^\infty(\varOmega) \to \mathcal{L}(L_\diamond^2(\partial \varOmega))
\end{equation}
is nonlinear and is called the (continuum model) \emph{forward operator} of EIT. The inverse problem of EIT is to find $\sigma$ from the knowledge of $\varLambda(\sigma)$. In practice, this means that the determination of the conductivity is based on current-voltage pairs measured on the boundary.
The following example shows that by composing the forward operator with an elementary function, one can obtain another forward operator that in some cases depends more linearly on the electrical input parameters.
  \begin{example}[Constant conductivity]
  \label{ex:const}
  If $\sigma \equiv \inv(\rho) \coloneqq \rho^{-1}$ for $\rho \in \mathbf{R}_+$, then the potential $u$ from \eqref{eq:eitcont} solves the Laplace equation $\Delta u = 0$ with the normal derivative $\rho f$ on the boundary. In particular, the mapping $\rho \mapsto \varLambda(\sigma) = (\varLambda \circ \inv)(\rho)$ is linear when restricted to spatially constant functions.
  \end{example}
Of course, the example above is nothing but the Ohm's law, stating that in a homogeneous medium the potential depends linearly on the resistivity $\rho$.
Motivated by the example, we denote
\begin{equation}
\label{eq:star}
\varLambda_{\rm inv} \coloneqq \varLambda \circ \inv \colon \rho \mapsto \varLambda(\rho^{-1}), \quad L_+^\infty(\varOmega) \to \mathcal{L}(L_\diamond^2(\partial \varOmega)).
\end{equation}
Similarly, the logarithm of the conductivity is denoted by $\kappa \coloneqq \log(\sigma)$ and the corresponding composite forward operator is defined as
\begin{equation}
\label{eq:starstar}
\varLambda_{\exp} \coloneqq \varLambda \circ \exp \colon \kappa \mapsto \varLambda(\mathrm{e}^\kappa), \quad L^\infty(\varOmega) \to \mathcal{L}(L_\diamond^2(\partial \varOmega)).
\end{equation}
The advantage of considering the logarithm of the conductivity is that the domain of $\varLambda_{\exp}$ is the natural $L^\infty$-space without positivity constraints, simplifying many optimization schemes in numerical computations.
On the other hand, the advantage of using the standard conductivity parametrization with the operator $\varLambda$ is the simplicity of the corresponding sesquilinear forms, for example in the context of stochastic Galerkin methods \cite{Leinonen14}.
Note that considering the logarithm of the resistivity would not add anything new compared to \eqref{eq:starstar}, except for a sign change.

It is well known that the map $\sigma \mapsto \varLambda(\sigma)$ is Fr\'echet differentiable,~i.e.,~that the bounded derivative $$D\varLambda(\sigma;\eta) \colon L^2_\diamond(\partial \varOmega) \to L^2_\diamond(\partial \varOmega)$$ exists for every $\sigma \in L^\infty_+(\varOmega)$ and depends linearly and boundedly on $\eta \in L^\infty(\varOmega)$ in the topology of  $\mathcal{L}(L_\diamond^2(\partial \varOmega))$ (see,~e.g.,~\cite[Section~3]{Lechleiter08b}).
In fact, the derivative can be obtained from the equation
\begin{equation}
\label{eq:sigmaderiv}
\big( f, D \varLambda(\sigma; \eta) g  \big) = -\int_\varOmega \eta \nabla u_f \cdot \nabla \overline{u}_g \, {\rm d} x, \qquad f,g \in L^2_\diamond(\partial \varOmega),
\end{equation}
where $u_f \in H^1(\varOmega)$ denotes the solution to the forward problem \eqref{eq:eitcont} with the conductivity $\sigma$ and current density $f$.
By using the chain rule of differentiation for Banach spaces and \eqref{eq:sigmaderiv}, it easily follows that the alternative parametrizations of the forward operator \eqref{eq:star} and \eqref{eq:starstar} are also Fr\'echet differentiable and the corresponding derivatives can be characterized by 
\begin{equation}
\label{eq:rhoderiv}
\big( f, D \varLambda_{\rm inv}(\rho; \eta) g \big) = \int_\varOmega \frac{\eta}{\rho^2} \nabla u_f \cdot \nabla \overline{u}_g \, {\rm d} x
\end{equation}
and
\begin{equation}
\label{eq:kappaderiv}
\big( f, D \varLambda_{\exp}(\kappa; \eta) g \big) = -\int_\varOmega \eta \, \mathrm{e}^\kappa \nabla u_f \cdot \nabla \overline{u}_g \, {\rm d} x
\end{equation}
for $f,g \in L^2_\diamond(\partial \varOmega)$.

\section{Logarithmic forward operator}
\label{sec:log}

In this section, we first introduce the logarithm of the Neumann-to-Dirichlet map $\varLambda(\sigma)$ as an unbounded operator on $L^2_\diamond(\partial \varOmega)$ and subsequently consider its differentiability with respect to the conductivity.

\subsection{Formal definition}
\label{ssec:logdef}

The spectral representation \eqref{eq:spectral} allows a simple way of defining a logarithm for the Neumann-to-Dirichlet operator:
  \begin{equation}
  \label{eq:logspectral}
  \log\!\varLambda(\sigma) \colon f \mapsto \sum_{k=1}^\infty \log(\lambda_k) (f,\phi_k) \, \phi_k,
  \end{equation}
where $\log$ denotes the principal branch of the natural logarithm.
As the eigenvalues $\{\lambda_k\}_{k=1}^\infty \subset \R_+$ accumulate at zero, $\log \! \varLambda(\sigma)$ is not bounded as an operator from $L^2_\diamond(\partial \varOmega)$ to itself. We define the domain of $\log \! \varLambda(\sigma)$ to simply be
\begin{equation}
\label{eq:domlog}
\mathcal{D}\big(\log\!\varLambda(\sigma)\big)  = \bigg\{g \in L^2_\diamond(\partial \varOmega) \ : \ \| \log\!\varLambda(\sigma)g \|_{L^2(\partial \varOmega)}^2 =  
\sum_{k=1}^\infty  \log^2(\lambda_k) |(g,\phi_k)|^2 < \infty \bigg\}.
\end{equation}
It is obvious that $\mathcal{D}(\log \! \varLambda(\sigma))$ is a dense linear subspace of $L^2_\diamond(\partial \varOmega)$ and $\log \! \varLambda(\sigma) f \in L^2_\diamond(\partial \varOmega)$ for any $f \in \mathcal{D}(\log \! \varLambda(\sigma))$.

\begin{proposition}
The logarithmic Neumann-to-Dirichlet operator $\log\!\varLambda(\sigma)$ defined by \eqref{eq:logspectral} and \eqref{eq:domlog} is a self-adjoint (unbounded) operator on $L^2_\diamond(\partial \varOmega)$.
\end{proposition}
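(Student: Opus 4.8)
The plan is to observe that $\log\!\varLambda(\sigma)$ is, in the basis $\{\phi_k\}_{k=1}^\infty$, simply a diagonal operator with a \emph{real} diagonal taken on its maximal domain, a situation in which self-adjointness is classical; I would nonetheless spell out the short argument directly in terms of the spectral data $\{\lambda_k,\phi_k\}_{k=1}^\infty$. The only input that genuinely uses the structure of EIT, rather than the abstract spectral picture, is that every eigenvalue is strictly positive, so that $\log(\lambda_k) \in \R$ is well defined: testing the positivity estimate \eqref{eq:positivity} with $f = \phi_k$ and recalling that the dual pairing restricts to the $L^2$ inner product gives $\lambda_k = (\phi_k, \varLambda(\sigma)\phi_k) \geq c\,\|\phi_k\|_{H^{-1/2}(\partial\varOmega)}^2 > 0$. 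Since in addition $\lambda_k \to 0$ we have $\log(\lambda_k) \to -\infty$, which reconfirms unboundedness but plays no role below.

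First I would check symmetry. The domain \eqref{eq:domlog} is dense, since it contains every finite linear combination of the $\phi_k$, so the adjoint $(\log\!\varLambda(\sigma))^*$ is well defined. For $f,g \in \mathcal{D}(\log\!\varLambda(\sigma))$, Parseval's identity and the Cauchy–Schwarz inequality together with the summability condition in \eqref{eq:domlog} show that $\sum_{k} \log(\lambda_k)\,(f,\phi_k)\,\overline{(g,\phi_k)}$ converges absolutely; because $\log(\lambda_k)$ is real this common value equals both $(\log\!\varLambda(\sigma)f,\,g)$ and $(f,\,\log\!\varLambda(\sigma)g)$. Hence $\log\!\varLambda(\sigma) \subseteq (\log\!\varLambda(\sigma))^*$.

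It remains to prove the reverse inclusion $\mathcal{D}\big((\log\!\varLambda(\sigma))^*\big) \subseteq \mathcal{D}\big(\log\!\varLambda(\sigma)\big)$. Let $g$ lie in $\mathcal{D}\big((\log\!\varLambda(\sigma))^*\big)$ and set $h \coloneqq (\log\!\varLambda(\sigma))^* g \in L^2_\diamond(\partial\varOmega)$, so that $(\log\!\varLambda(\sigma)f,\,g) = (f,\,h)$ for all $f \in \mathcal{D}(\log\!\varLambda(\sigma))$. Choosing $f = \phi_k$ yields $(h,\phi_k) = \log(\lambda_k)\,(g,\phi_k)$ for every $k$, whence, by Parseval's identity,
\begin{equation*}
\sum_{k=1}^\infty \log^2(\lambda_k)\,\big|(g,\phi_k)\big|^2 \;=\; \sum_{k=1}^\infty \big|(h,\phi_k)\big|^2 \;=\; \|h\|_{L^2(\partial\varOmega)}^2 \;<\; \infty .
\end{equation*}
By \eqref{eq:domlog} this means $g \in \mathcal{D}(\log\!\varLambda(\sigma))$, and comparing Fourier coefficients gives $h = \log\!\varLambda(\sigma)g$; thus $(\log\!\varLambda(\sigma))^* \subseteq \log\!\varLambda(\sigma)$, and combined with the previous paragraph $\log\!\varLambda(\sigma)$ is self-adjoint.

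I expect no real obstacle here: apart from the strict positivity of the $\lambda_k$, which is what turns the logarithm of the spectrum into a well-defined real sequence, every step is the standard verification that a diagonal operator with real diagonal, taken on its natural maximal domain, is self-adjoint — the interchanges of summation and inner product being justified throughout by Cauchy–Schwarz and the membership of the relevant vectors in $L^2_\diamond(\partial\varOmega)$.
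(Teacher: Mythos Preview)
Your proof is correct. The symmetry step is essentially the same as in the paper, but for the reverse inclusion $\mathcal{D}\big((\log\!\varLambda(\sigma))^*\big)\subseteq\mathcal{D}\big(\log\!\varLambda(\sigma)\big)$ you take a more elementary route: you test the adjoint identity against the individual eigenfunctions $f=\phi_k$, read off $(h,\phi_k)=\log(\lambda_k)(g,\phi_k)$, and conclude via Parseval. The paper instead decomposes $L^2_\diamond(\partial\varOmega)=V\oplus W$ into the nullspace of $\log\!\varLambda(\sigma)$ and its orthogonal complement, observes that the range of $\log\!\varLambda(\sigma)$ is exactly $W$, invokes a result from Yosida to deduce self-adjointness as an operator $\mathcal{D}(\log\!\varLambda(\sigma))\cap W\to W$, and then handles a general $g\in\mathcal{D}\big((\log\!\varLambda(\sigma))^*\big)$ by splitting it along this decomposition. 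Your argument is shorter and avoids the external reference, exploiting directly that the operator is diagonal in the basis $\{\phi_k\}$; the paper's approach is more structural and would transfer verbatim to situations where one knows the range and nullspace but does not have such an explicit diagonalization at hand.
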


\begin{proof}
Since $\mathcal{D}(\log\!\varLambda(\sigma))$ is dense in $L^2_\diamond(\partial \varOmega)$, the adjoint operator $$\log\!\varLambda(\sigma)^* \colon \mathcal{D}(\log \! \varLambda(\sigma)^*) \to L^2_\diamond(\partial \varOmega)$$ is well defined~\cite[p.~196, Thm.~1]{Yosida80}. With the help of the Cauchy--Schwarz inequality, it is easy to check that
\begin{equation}
\label{eq:symmetry}
(\log\!\varLambda(\sigma) f, g) = (f, \log\!\varLambda(\sigma) g)
\end{equation}
is finite for all $f, g \in \mathcal{D}(\log\!\varLambda(\sigma))$. In  other words, $\log\!\varLambda(\sigma)$ is symmetric, that is, $\log\!\varLambda(\sigma)^* = \log\!\varLambda(\sigma)$ on $\mathcal{D}(\log\!\varLambda(\sigma)) \subset \mathcal{D}(\log\!\varLambda(\sigma)^*)$.

Let $V:=\mathcal{N}(\log\!\varLambda(\sigma))  \subset \mathcal{D}(\log\!\varLambda(\sigma))$ be the nullspace of $\log\!\varLambda(\sigma)$. Note that $V$ is either finite-dimensional or the trivial subspace, depending on whether $\lambda=1$ is an eigenvalue of $\varLambda (\sigma)$. In consequence, $L^2_\diamond(\partial \varOmega) = V \oplus W$, where $W:= V^\perp$ is the orthogonal complement of $V$. It follows from \eqref{eq:logspectral} and \eqref{eq:domlog} that the range of $\log\!\varLambda(\sigma)$ is exactly $W$, and thus $\log\!\varLambda(\sigma)$ is self-adjoint as an operator from $\mathcal{D}(\log\!\varLambda(\sigma))\cap W$ to $W$~\cite[p.~199, Corollary]{Yosida80}. 

Let $g \in \mathcal{D}(\log\!\varLambda(\sigma)^*)$ be arbitrary,~i.e.,~there exists $g^* \in L^2_\diamond(\partial \varOmega)$ such that  
$$
(\log\!\varLambda(\sigma) f, g) = (f, g^*)  \qquad {\rm for} \ {\rm all} \ f \in \mathcal{D}(\log\!\varLambda(\sigma)).
$$
Decompose $g = g_0 + g_1$, with $g_0 \in V \subset \mathcal{D}(\log\!\varLambda(\sigma))$ and $g_1 \in W$, and observe that
$$
(f, g^*) = (\log\!\varLambda(\sigma) f, g_0) +
(\log\!\varLambda(\sigma) f, g_1) = (\log\!\varLambda(\sigma) f, g_1) 
$$
for all $f \in \mathcal{D}(\log\!\varLambda(\sigma))$ due to \eqref{eq:symmetry}. Since $\log\!\varLambda(\sigma)$ is self-adjoint on $\mathcal{D}(\log\!\varLambda(\sigma))\cap W$, $g_1$ must belong to $\mathcal{D}(\log\!\varLambda(\sigma))\cap W$, and altogether we have $g =  g_0 + g_1 \in \mathcal{D}(\log\!\varLambda(\sigma)) \subset  \mathcal{D}(\log\!\varLambda(\sigma)^*)$. In other words, $\mathcal{D}(\log\!\varLambda(\sigma)^*) = \mathcal{D}(\log\!\varLambda(\sigma))$, which completes the proof.
\end{proof}

Because any self-adjoint operator is also closed~\cite{Yosida80}, $\mathcal{D}(\log\!\varLambda(\sigma))$ becomes a Hilbert space if equipped with the graph norm
\begin{equation}
\label{eq:lognorm}
\| g \|_{\mathcal{G}(\log\! \varLambda(\sigma))} = \left(\| g \|_{L^2(\partial \varOmega)}^2
+   \| \log\!\varLambda (\sigma)  g \|_{L^2(\partial \varOmega)}^2\right)^{1/2},
\end{equation}
with respect to which $\log\!\varLambda (\sigma)$ is trivially a bounded operator.

\begin{corollary}
\label{corollary}
The logarithmic Neumann-to-Dirichlet map $\log\!\varLambda(\sigma)$ defined by \eqref{eq:logspectral} and \eqref{eq:domlog} can be interpreted as  a compact operator
$$
\log \! \varLambda(\sigma) \colon H^{\epsilon}_{\diamond}(\partial \varOmega) \to L^2_{\diamond}(\partial \varOmega), \qquad \epsilon > 0,   
$$
that coincides with its $L^2_\diamond(\partial \varOmega)$-adjoint on $H^\epsilon_{\diamond}(\partial \varOmega)$. 
\end{corollary}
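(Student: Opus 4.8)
The plan is to realize $\log\!\varLambda(\sigma)$, restricted to $H^\epsilon_\diamond(\partial\varOmega)$, as the composition of a smoothing operator with a compact diagonal one. We may assume $\epsilon\in(0,1)$ and put $\delta:=\min\{\epsilon,\tfrac12\}$. The principal step will be to show that
\[
\varLambda(\sigma)^{-\delta}\colon g\longmapsto\sum_{k=1}^\infty\lambda_k^{-\delta}\,(g,\phi_k)\,\phi_k
\]
defines a bounded operator from $H^\delta_\diamond(\partial\varOmega)$ to $L^2_\diamond(\partial\varOmega)$.

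For this, let $\varTheta$ be a positive self-adjoint operator on $L^2_\diamond(\partial\varOmega)$ generating the Sobolev scale on $\partial\varOmega$ up to order one --- for instance $\varTheta=(I-\Delta_{\partial\varOmega})^{1/2}$, built from the Laplace--Beltrami form on the Lipschitz hypersurface $\partial\varOmega$ --- so that $\mathcal{D}(\varTheta^s)=H^s_\diamond(\partial\varOmega)$ and $\|\varTheta^s\,\cdot\,\|_{L^2(\partial\varOmega)}\asymp\|\cdot\|_{H^s(\partial\varOmega)}$ for $s\in[0,1]$. Writing $\varLambda(\sigma)=\varTheta^{-1/2}K\varTheta^{-1/2}$ with $K:=\varTheta^{1/2}\varLambda(\sigma)\varTheta^{1/2}$, the bounded invertibility of $\varLambda(\sigma)\colon H^{-1/2}_\diamond(\partial\varOmega)\to H^{1/2}_\diamond(\partial\varOmega)$ together with the positivity \eqref{eq:positivity} shows, via a density argument, that $K$ extends to a bounded operator on $L^2_\diamond(\partial\varOmega)$ with $cI\le K\le CI$ for some $0<c\le C$; equivalently, $c\,\varTheta^{-1}\le\varLambda(\sigma)\le C\,\varTheta^{-1}$ as bounded positive operators on $L^2_\diamond(\partial\varOmega)$. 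Since $t\mapsto t^{2\delta}$ is operator monotone on $[0,\infty)$ for $2\delta\in(0,1]$ (L{\"o}wner--Heinz), raising this inequality to the power $2\delta$ and passing to the (unbounded) inverses yields, in the sense of quadratic forms, $C^{-2\delta}\varTheta^{2\delta}\le\varLambda(\sigma)^{-2\delta}\le c^{-2\delta}\varTheta^{2\delta}$. In particular $\mathcal{D}(\varLambda(\sigma)^{-\delta})=\mathcal{D}(\varTheta^\delta)=H^\delta_\diamond(\partial\varOmega)$ and $\|\varLambda(\sigma)^{-\delta}g\|_{L^2(\partial\varOmega)}\le c^{-\delta}\|\varTheta^\delta g\|_{L^2(\partial\varOmega)}\le C'\|g\|_{H^\delta(\partial\varOmega)}$.

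The remaining steps are routine. Every $\lambda_k$ lies in $(0,\|\varLambda(\sigma)\|]$, on which $t\mapsto t^\delta|\log t|$ is bounded and tends to $0$ as $t\to0^+$; hence $M_\delta:=\sup_k\lambda_k^\delta|\log\lambda_k|<\infty$ and $\lambda_k^\delta|\log\lambda_k|\to0$. For $g\in H^\epsilon_\diamond(\partial\varOmega)\subset H^\delta_\diamond(\partial\varOmega)$ we then get
\[
\sum_{k=1}^\infty\log^2(\lambda_k)\,|(g,\phi_k)|^2\le M_\delta^{\,2}\sum_{k=1}^\infty\lambda_k^{-2\delta}\,|(g,\phi_k)|^2=M_\delta^{\,2}\,\big\|\varLambda(\sigma)^{-\delta}g\big\|_{L^2(\partial\varOmega)}^2\le(M_\delta C')^2\,\|g\|_{H^\epsilon(\partial\varOmega)}^2,
\]
so $H^\epsilon_\diamond(\partial\varOmega)\subset\mathcal{D}(\log\!\varLambda(\sigma))$ and $\log\!\varLambda(\sigma)\colon H^\epsilon_\diamond(\partial\varOmega)\to L^2_\diamond(\partial\varOmega)$ is bounded. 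Compactness follows by factoring $\log\!\varLambda(\sigma)=B_\delta\circ\varLambda(\sigma)^{-\delta}$ on $H^\epsilon_\diamond(\partial\varOmega)$, where $B_\delta:=\sum_{k=1}^\infty\log(\lambda_k)\,\lambda_k^\delta\,(\,\cdot\,,\phi_k)\,\phi_k$ is compact on $L^2_\diamond(\partial\varOmega)$ (diagonal, with $\log(\lambda_k)\lambda_k^\delta\to0$) and $\varLambda(\sigma)^{-\delta}\colon H^\epsilon_\diamond(\partial\varOmega)\to L^2_\diamond(\partial\varOmega)$ is bounded, a compact operator precomposed with a bounded one being compact. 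Finally, the coincidence with the $L^2_\diamond(\partial\varOmega)$-adjoint on $H^\epsilon_\diamond(\partial\varOmega)$ is immediate from the preceding proposition: $\log\!\varLambda(\sigma)$ is symmetric on its domain and $H^\epsilon_\diamond(\partial\varOmega)$ is contained therein, so $(\log\!\varLambda(\sigma)f,g)=(f,\log\!\varLambda(\sigma)g)$ for all $f,g\in H^\epsilon_\diamond(\partial\varOmega)$.

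I expect the only real obstacle to be the smoothing estimate of the second paragraph, i.e.\ converting the qualitative isomorphism property of $\varLambda(\sigma)$ between $H^{-1/2}_\diamond(\partial\varOmega)$ and $H^{1/2}_\diamond(\partial\varOmega)$ into the quantitative two-sided bound $c\,\varTheta^{-1}\le\varLambda(\sigma)\le C\,\varTheta^{-1}$ and then transporting it to the fractional powers via operator monotonicity; the constraint $2\delta\le1$ is precisely the range in which L{\"o}wner--Heinz applies, and it is harmless because $H^\epsilon_\diamond(\partial\varOmega)\hookrightarrow H^\delta_\diamond(\partial\varOmega)$. Everything else --- boundedness of $t^\delta|\log t|$ near the origin, compactness of a diagonal operator with vanishing symbol, and symmetry --- is standard.
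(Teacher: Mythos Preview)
Your proof is correct, but it takes a different technical route to the key intermediate fact than the paper does. Both arguments ultimately establish that $\mathcal{D}\big(\varLambda(\sigma)^{-s}\big)=H^s_\diamond(\partial\varOmega)$ with equivalent norms for $0\le s\le\tfrac12$; the paper packages this as a separate lemma (Lemma~\ref{lemma:sobscale}) and proves it by first noting that $\varLambda(\sigma)^{1/2}\colon L^2_\diamond(\partial\varOmega)\to H^{1/2}_\diamond(\partial\varOmega)$ is an isomorphism and then invoking complex interpolation \`a la Lions--Magenes to fill in the intermediate scale. You instead compare $\varLambda(\sigma)$ two-sidedly with an abstract generator $\varTheta^{-1}$ of the Sobolev scale and push the inequality to fractional powers via L\"owner--Heinz, which is a clean and self-contained alternative that avoids explicit interpolation machinery (though the existence of such a $\varTheta$ on a merely Lipschitz boundary deserves a word of justification; your Laplace--Beltrami suggestion is a bit casual there). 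For compactness, the paper shows boundedness of $\log\!\varLambda(\sigma)\colon H^{\epsilon/2}_\diamond(\partial\varOmega)\to L^2_\diamond(\partial\varOmega)$ and then composes with the compact embedding $H^\epsilon_\diamond(\partial\varOmega)\hookrightarrow H^{\epsilon/2}_\diamond(\partial\varOmega)$, whereas your factorization $\log\!\varLambda(\sigma)=B_\delta\circ\varLambda(\sigma)^{-\delta}$ with a compact diagonal $B_\delta$ is equally valid and arguably more direct. The adjoint statement is handled identically in both.
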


The proof of Corollary~\ref{corollary} is based on the following lemma. Observe that the lemma could be extended (with obvious modifications) for $-1/2 \leq s \leq 1/2$ by duality and for a wider scale of smoothness indices by utilizing some integer power of $\varLambda(\sigma)^{-1}$ in place of $\varLambda(\sigma)^{-1/2}$ in the following proof, assuming $\partial \varOmega$ is smooth enough.

\begin{lemma}
\label{lemma:sobscale}
For any fixed $\sigma \in L^\infty_+(\varOmega)$, it holds that
$$
H^{s}_{\diamond}(\partial \varOmega) = \big\{ g \in L^2_\diamond(\partial \varOmega) \ : \ \|g\|_{{s}} < \infty \big\}, \qquad 0 \leq s \leq \frac{1}{2},
$$
where
$$
 \|g\|_{s} := \bigg( \sum_{k=1}^\infty \frac{1}{\lambda_k^{2s}}  |(g,\phi_k)|^2 \bigg)^{1/2}
$$
defines an equivalent norm for $H^{s}_{\diamond}(\partial \varOmega)$.
\end{lemma}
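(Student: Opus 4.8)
The plan is as follows. The case $s=0$ is immediate, since $\|g\|_0=\|g\|_{L^2(\partial\varOmega)}$ and $H^0_\diamond(\partial\varOmega)=L^2_\diamond(\partial\varOmega)$. I would prove the endpoint $s=1/2$ directly and then obtain the range $0<s<1/2$ by interpolation.

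\emph{Endpoint $s=1/2$.} Let $\varLambda(\sigma)^{-1}\colon H^{1/2}_\diamond(\partial\varOmega)\to H^{-1/2}_\diamond(\partial\varOmega)$ be the inverse of the isomorphism $\varLambda(\sigma)\colon H^{-1/2}_\diamond(\partial\varOmega)\to H^{1/2}_\diamond(\partial\varOmega)$ and consider the sesquilinear form
\[
  \tilde B(f,g) \coloneqq \big\langle \varLambda(\sigma)^{-1}f,\, g \big\rangle, \qquad f,g\in H^{1/2}_\diamond(\partial\varOmega).
\]
Writing $f=\varLambda(\sigma)h$, the self-adjointness \eqref{eq:sadjoint}, the coercivity \eqref{eq:positivity} and the boundedness of $\varLambda(\sigma)$ and of $\varLambda(\sigma)^{-1}$ as maps between $H^{-1/2}_\diamond(\partial\varOmega)$ and $H^{1/2}_\diamond(\partial\varOmega)$ show that $\tilde B$ is Hermitian, bounded and coercive on $H^{1/2}_\diamond(\partial\varOmega)$, so $g\mapsto\tilde B(g,g)^{1/2}$ is a norm equivalent to $\|\cdot\|_{H^{1/2}(\partial\varOmega)}$. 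Each $\phi_k$ lies in $H^{1/2}_\diamond(\partial\varOmega)$ (it is in the range of $\varLambda(\sigma)$), and $\mathcal F\coloneqq\operatorname{span}\{\phi_k\}_{k\geq1}$ is dense in $H^{1/2}_\diamond(\partial\varOmega)$: if $h\in H^{-1/2}_\diamond(\partial\varOmega)$ satisfies $\langle h,\phi_k\rangle=0$ for all $k$, then $(\varLambda(\sigma)h,\phi_k)=0$ for all $k$ by \eqref{eq:sadjoint}, whence $\varLambda(\sigma)h=0$ and $h=0$. Since $\varLambda(\sigma)^{-1}\phi_k=\lambda_k^{-1}\phi_k$, a direct computation gives $\tilde B(f,f)=\|f\|_{1/2}^2$ for all $f\in\mathcal F$. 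Hence $\|\cdot\|_{1/2}$ and $\|\cdot\|_{H^{1/2}(\partial\varOmega)}$ are equivalent on $\mathcal F$, which is dense both in $H^{1/2}_\diamond(\partial\varOmega)$ and in the Hilbert space $X_{1/2}\coloneqq\{g\in L^2_\diamond(\partial\varOmega):\|g\|_{1/2}<\infty\}$ (the latter being a weighted $\ell^2$-type space in the coefficients $(g,\phi_k)$, in which finite eigenfunction sums are dense). As both spaces embed continuously into $L^2_\diamond(\partial\varOmega)$ and these embeddings agree on $\mathcal F$, their completions coincide; therefore $X_{1/2}=H^{1/2}_\diamond(\partial\varOmega)$ with equivalent norms.

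\emph{Range $0<s<1/2$.} Here I would interpolate. Because $\lambda_k\leq\lambda_1$ for every $k$, one has $\|g\|_{L^2(\partial\varOmega)}\leq\lambda_1^s\|g\|_s$, so $X_s\coloneqq\{g\in L^2_\diamond(\partial\varOmega):\|g\|_s<\infty\}$ equipped with the norm $\|\cdot\|_s$ is, up to equivalent norms, the domain of the $s$-th fractional power of the nonnegative self-adjoint operator $g\mapsto\sum_k\lambda_k^{-1}(g,\phi_k)\phi_k$ on $L^2_\diamond(\partial\varOmega)$. Standard Hilbert-space interpolation of fractional powers then gives $[L^2_\diamond(\partial\varOmega),X_{1/2}]_{2s}=X_s$ with equivalent norms, while on the function-space side $[L^2_\diamond(\partial\varOmega),H^{1/2}_\diamond(\partial\varOmega)]_{2s}=H^s_\diamond(\partial\varOmega)$ with equivalent norms (this reduces to the classical identity $[L^2(\partial\varOmega),H^{1/2}(\partial\varOmega)]_{2s}=H^s(\partial\varOmega)$, since the mean-free subspaces are ranges of a projection bounded on the whole scale). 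Combining these with $X_{1/2}=H^{1/2}_\diamond(\partial\varOmega)$ from the previous step and the stability of interpolation under equivalent renorming yields $X_s=H^s_\diamond(\partial\varOmega)$ with equivalent norms, which is the assertion.

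I expect the endpoint $s=1/2$ to be the crux: one has to reconcile two a priori different Hilbert-space structures on $\operatorname{span}\{\phi_k\}$ — the one inherited from $H^{1/2}(\partial\varOmega)$ and the one induced by $\|\cdot\|_{1/2}$ — without assuming that the $L^2$-eigenfunction expansion of a generic element of $H^{1/2}_\diamond(\partial\varOmega)$ converges in $H^{1/2}(\partial\varOmega)$, i.e.\ without claiming that the spectral projections are bounded on $H^{1/2}(\partial\varOmega)$. The coercive form $\tilde B$ together with the density/completeness argument is exactly what sidesteps this difficulty; the remaining interpolation (and, for negative $s$, duality) steps are routine.
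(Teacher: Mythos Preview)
Your proof is correct and follows the same overall architecture as the paper: establish the endpoint $s=1/2$ and then interpolate. The difference lies in how the endpoint is handled. The paper invokes a cited result (\cite[Lemma~3.4]{Bruhl01}) stating that the positive square root $\varLambda^{1/2}(\sigma)$ is an isomorphism $L^2_\diamond(\partial\varOmega)\to H^{1/2}_\diamond(\partial\varOmega)$, from which $\mathcal{D}(\varLambda^{-1/2}(\sigma))=H^{1/2}_\diamond(\partial\varOmega)$ follows immediately; it then applies the Lions--Magenes identification of interpolation spaces with domains of fractional powers in one stroke. You instead argue directly that the form $\tilde B(f,g)=\langle\varLambda(\sigma)^{-1}f,g\rangle$ is an equivalent inner product on $H^{1/2}_\diamond(\partial\varOmega)$, verify that on finite eigenfunction spans this form coincides with $\|\cdot\|_{1/2}^2$, and conclude by a density/completion argument inside $L^2_\diamond(\partial\varOmega)$. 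Your route is more self-contained---it does not appeal to the square-root isomorphism lemma---and it makes explicit the reason the two Hilbert structures match, at the price of being somewhat longer. The interpolation step is essentially the same in both proofs, only phrased slightly differently (you interpolate the $X_s$ and $H^s_\diamond$ scales separately and match them, while the paper identifies them in a single line via \cite{Lions72}).
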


\begin{proof}
Let us introduce the positive powers of $\varLambda(\sigma) \colon L^2_\diamond(\partial \varOmega) \to L^2_\diamond(\partial \varOmega)$ in the natural way, that is,
$$
\varLambda^s(\sigma) \colon f  \mapsto \sum_{k=1}^\infty \lambda_k^{s} (f,\phi_k) \, \phi_k, \quad  L^2_\diamond(\partial \varOmega) \to L^2_\diamond(\partial \varOmega),
$$
which defines a compact, injective, self-adjoint operator with a dense range for any $s > 0$. The negative powers are the corresponding inverse operators: 
$$
\varLambda^{-s}(\sigma) \colon f  \mapsto \sum_{k=1}^\infty \frac{1}{\lambda_k^{s}} (f,\phi_k) \, \phi_k, \quad \mathcal{D}\big(\varLambda^{-s}(\sigma)\big) \to L^2_\diamond(\partial \varOmega), \qquad s > 0, 
$$
where (cf.,~e.g.,~\cite[Theorem~2.8]{Engl96})
$$
\mathcal{D}\big(\varLambda^{-s}(\sigma)\big) := \mathcal{R}\big(\varLambda^{s}(\sigma)\big) =  \big\{g \in L^2_\diamond(\partial \varOmega) \ : \ \|\varLambda^{-s}(\sigma)g\|_{L^2(\partial \varOmega)} = \| g \|_{s} < \infty \big\}.
$$
Using the same arguments as for $\log\!\varLambda(\sigma)$ above, it is trivial to deduce that $\varLambda^{-s}(\sigma) \colon \mathcal{D}(\varLambda^{-s}(\sigma)) \to L^2_\diamond(\partial \varOmega)$ is self-adjoint and becomes an isomorphism between Hilbert spaces if its domain is equipped with the graph norm defined via
\begin{equation}
\label{eq:equivalence}
\| g \|_{s}  \leq \| g \|_{\mathcal{G}(\varLambda^{-s}(\sigma))} := \Big(\| g \|_{L^2(\partial \varOmega)}^2
+   \| \varLambda^{-s} (\sigma)  g \|_{L^2(\partial \varOmega)}^2\Big)^{1/2} \leq C
\| g \|_{s}
\end{equation}
for $g \in \mathcal{D}(\varLambda^{-s}(\sigma))$ and with $C>0$.

Since $\varLambda(\sigma) \colon H^{-1/2}_\diamond(\partial \varOmega) \to H^{1/2}_\diamond(\partial \varOmega)$ is positive and self-adjoint, the (positive) square root $\varLambda^{1/2}(\sigma)$ is, in fact, an isomorphism from $L^2_\diamond(\partial \varOmega)$ to $H^{1/2}_\diamond(\partial \varOmega)$; see,~e.g.,~\cite[Lemma~3.4]{Bruhl01} for a simple proof. In particular, $\mathcal{D}(\varLambda^{-1/2}(\sigma)) = H^{1/2}_\diamond(\partial \varOmega)$.
According to \cite[p.~10, Definition~2.1 \& Remark~2.3]{Lions72} and the definition of $H^{s}(\partial \varOmega)$ as a (complex) interpolation space (see,~e.g.,~\cite[p.~36, Theorem~7.7]{Lions72}), we thus have
$$
H^{s}_\diamond(\partial \varOmega) = \big[H^{1/2}_\diamond(\partial \varOmega), L^2_{\diamond}(\partial \varOmega) \big]_{1-2s} = \mathcal{D}\big(\varLambda^{(1-(1-2s))/2}(\sigma)\big) = \mathcal{D}\big( \varLambda^{-s}(\sigma)\big), \qquad 0 \leq s \leq 1/2,
$$
with the graph norm of $\mathcal{D}( \varLambda^{-s}(\sigma))$ being equivalent to that of $H^{s}_\diamond(\partial \varOmega)$. The claim now follows from \eqref{eq:equivalence}.
\end{proof}

\noindent
{\em Proof of Corollary \ref{corollary}}.
By virtue of Lemma~\ref{lemma:sobscale} and the asymptotic dominance of $\lambda_k^{-\epsilon}$ over $\log^2(\lambda_k)$ as $k$ tends to infinity,
$$
\| \log\!\varLambda(\sigma) f\|_{L^2(\partial \varOmega)} \leq \| f \|_{\mathcal{G}(\log\! \varLambda(\sigma))} \leq C \| f \|_{\mathcal{G}(\varLambda^{-\epsilon/2}(\sigma))} \leq C \| f \|_{\epsilon/2} \leq C \| f \|_{H^{\epsilon/2}_\diamond(\partial \varOmega)}
$$
for any $\epsilon > 0$ and $f \in H^{\epsilon/2}_\diamond(\partial \varOmega)$, with $C = C(\epsilon)>0$ that may change between different occurrences. Since $\log\!\varLambda(\sigma) \colon L^2_\diamond(\partial \varOmega) \supset \mathcal{D}(\log\!\varLambda(\sigma)) \to L^2_\diamond(\partial \varOmega)$ is self-adjoint and the embedding $H^{\epsilon}_\diamond(\partial \varOmega) \hookrightarrow H^{\epsilon/2}_\diamond(\partial \varOmega)$ is compact, the assertion follows. \hfill $\square$

So far we have discussed the logarithm of the linear Neumann-to-Dirichlet operator.
A natural way to define the corresponding nonlinear forward operator is as follows (cf.\ \eqref{eq:lambda} and \eqref{eq:starstar}):
\begin{definition}
\label{def:logarithmic}
The logarithmic forward operator of EIT is
  \begin{equation}
  L \coloneqq \log\!\varLambda \circ \exp \colon \kappa \mapsto \log\!\varLambda(\mathrm{e}^\kappa), \quad L^\infty(\varOmega) \to \mathcal{L}(H^{\epsilon}_\diamond(\partial \varOmega), L^2_\diamond(\partial \varOmega))
\end{equation}
for some $\epsilon > 0$.
\end{definition}
Considering the conductivity logarithm $\kappa$ instead of $\sigma$ affords the same convenience as with the non-logarithmic forward operator $\varLambda_{\exp}$ in \eqref{eq:starstar}.
Moreover, the linear nature of $\varLambda_{\rm inv}$ in \eqref{eq:star} with constant conductivities (see~Example~\ref{ex:const}) becomes now available as well, as shown next.

\begin{example}[Constant conductivity revisited]
\label{ex:const2}
If $\sigma \equiv \mathrm{e}^\kappa$ is constant in $\varOmega$, then the mapping $\kappa \mapsto L(\kappa)$ is affine.
\end{example}
\begin{proof}
As in Example~\ref{ex:const}, the potential $u$ from \eqref{eq:eitcont} solves the Laplace equation with the normal derivative $\mathrm{e}^{-\kappa} f$.
Thus, the Neumann-to-Dirichlet operator satisfies
\begin{equation}
\varLambda(\sigma)f = \sum_{k=1}^\infty \lambda_k \mathrm{e}^{-\kappa} (f, \phi_k) \phi_k,
\end{equation}
where the eigensystem corresponds to the unit conductivity.
Taking the logarithm results in
\begin{equation}
L(\kappa)f = \log\!\varLambda(\sigma)f = \sum_{k=1}^\infty \log(\lambda_k) (f, \phi_k) \phi_k - \kappa \sum_{k=1}^\infty (f, \phi_k) \phi_k = (L(0) - \kappa \, \mathrm{id}) f,
\end{equation}
which proves the claim.
\end{proof}

The following example shows that the logarithmic forward operator depends ``almost linearly'' on the conductivity logarithm of an interior inclusion.
\begin{example}[Nested concentric disk]
\label{ex:inclusion}
Let $\varOmega \subset \mathbf{R}^2$ be the unit disk and let $\varOmega_R \subset \varOmega$ be an origin-centered disk with radius $0 < R < 1$.
Define the conductivity via its logarithm $\kappa = \log(\sigma)$ as
  \begin{equation}
  \kappa =  \begin{cases}
           0, &\mathrm{in} \;\varOmega \setminus \varOmega_R, \\
           \widetilde{\kappa}, &\mathrm{in} \;\varOmega_R
           \end{cases}
  \end{equation}
for some $\widetilde{\kappa} \in \R$.
Now the logarithmic forward operator satisfies $L(\kappa) = L(0) + \widetilde{\kappa} \, L'(0) + O(\widetilde{\kappa}^3)$ for some $L'(0) \in \mathcal{L}(L^2_\diamond(\partial \varOmega))$ and with $O(\widetilde{\kappa}^3)$ referring to the topology of $\mathcal{L}(L^2_\diamond(\partial \varOmega))$.
\end{example}
\begin{proof}
It is shown in \cite{Isaacson1986} that in this rotationally symmetric case the eigenfunctions of $\varLambda(\sigma)$ do not depend on $\widetilde{\kappa}$, and the eigenvalues are
  \begin{equation}
  \lambda_{2k-1} = \lambda_{2k} = \frac{1}{k} \frac{g_k(\widetilde{\kappa})}{g_k(-\widetilde{\kappa})}, \qquad k=1,2,3,\ldots,
  \end{equation}
where
\begin{equation}
g_k(\widetilde{\kappa}) \coloneqq 1 - \frac{(\mathrm{e}^{\tilde{\kappa}}-1)}{\mathrm{e}^{\tilde{\kappa}}+1} R^{2k}.
\end{equation}
The logarithm of the eigenvalue $\lambda_{2k}$ satisfies
\begin{equation}
\label{eq:odd}
\log(\lambda_{2k}) + \log(k) = \log(g_k(\widetilde{\kappa})) - \log(g_k(-\widetilde{\kappa})),
\end{equation}
which is an analytic, odd function with respect to $\tilde{\kappa}$.
In particular, the second derivative of $\log(\lambda_{2k})$ with respect to $\widetilde{\kappa}$ vanishes at the origin, and it is also straightforward, yet tedious to check that the corresponding first and third derivatives are uniformly bounded over $k \in \N$ in a neighborhood of the origin. The claim thus follows by plugging \eqref{eq:odd} in \eqref{eq:logspectral} and using Taylor's theorem.
\end{proof}

\subsection{Fr\'echet differentiability}
\label{ssec:logdiff}
Let us next consider the differentiability of $\log \! \varLambda(\sigma)$ with respect to the conductivity. Since $\log \! \varLambda(\sigma)$ has not itself been introduced as a boundary operator corresponding to some (elliptic) partial differential equation, but its definition directly involves an eigensystem for $\varLambda(\sigma)$, the natural way to start would be to consider the differentiability of $\{ \lambda_k, \phi_k\}_{k=1}^\infty \subset \R_+ \times L^2_\diamond(\partial \varOmega)$. Indeed, the Gateaux differentiability of (a suitable parametrization for) individual eigenvalues and eigenfunctions with respect to the conductivity could  be proven by considering the corresponding eigensystem $\{\lambda_k^{-1}, \phi_k\}_{k=1}^\infty$ of the (unbounded) Dirichlet-to-Neumann operator $\varLambda^{-1}(\sigma)$ and utilizing its analyticity  with respect to perturbations in the conductivity; see,~e.g.,~\cite{Calderon80} for the analyticity of  $\varLambda^{-1}(\,\cdot\,)$ and \cite[p.~392, Theorem~3.9]{Kato95} or \cite{Kriegl11} for more information about the differentiability of an eigensystem for a self-adjoint, unbounded operator with respect to a real parameter that does not affect the domain of definition. However, due to the singularity of the logarithm at the origin and the infiniteness of the sum in \eqref{eq:logspectral}, (the natural sense of) the differentiability of $\log \! \varLambda(\sigma)$ is not trivial to establish. Moreover, as our main motivation for differentiating $\log \! \varLambda(\sigma)$ with respect to the conductivity is numerical computing as in Section~\ref{sec:numerics}, we only consider derivatives for finite-dimensional approximations of $\sigma \mapsto \log \! \varLambda(\sigma)$ in what follows.

To be more precise, we investigate a finite-dimensional, positive and self-adjoint mapping
\begin{equation}
\label{eq:lambdan}
\varLambda^{(n)}(\sigma) := P^{(n)}\varLambda(\sigma) P^{(n)} \colon L^2_\diamond (\partial \varOmega) \to L^2_\diamond (\partial \varOmega), \qquad n \in \N,
\end{equation}
where 
\begin{equation}
\label{eq:projspan}
P^{(n)} = \sum_{k=1}^nP_k \colon L^2_\diamond (\partial \varOmega) \to {\rm span} \{ \psi_1, \dots , \psi_n \}
\end{equation}
is an $L^2(\partial \varOmega)$-orthogonal projection composed of individual projections $P_k$ onto fixed orthonormal basis functions $\psi_k \in L^2_\diamond(\partial \varOmega)$, $k=1, \dots, n$, respectively. Furthermore, let 
$$
\varLambda^{(n)}(\sigma) \colon f  \mapsto \sum_{k=1}^n \mu_k (f, \varphi_k) \, \varphi_k
$$
be a spectral representation for $\varLambda^{(n)}(\sigma)$, with $\{ \mu_k \}_{k=1}^n \subset \R_+$ and $\{ \varphi_k \}_{k=1}^n \subset {\rm span}\{\psi_1, \dots , \psi_n \}$ being orthonormal.
Take note that $\{ \mu_k \}_{k=1}^n$ and $\{ \varphi_k \}_{k=1}^n$ depend on $\sigma$, but the basis for the discretization $\{ \psi_k \}_{k=1}^n$ does {\em not}. The logarithm of $\varLambda^{(n)}(\sigma)$ is defined in the same way as for its infinite-dimensional counterpart:
\begin{equation}
  \label{eq:logspectral2}
  \log\!\varLambda^{(n)}(\sigma) \colon f \mapsto \sum_{k=1}^n \log(\mu_k) (f, \varphi_k) \, \varphi_k ,
\end{equation}
which is obviously a bounded, linear map from $L^2_\diamond(\partial \varOmega)$ to itself.

\begin{theorem}
\label{thm:Fder}
The mapping 
\begin{equation}
\label{eq:logfmap}
L^\infty_+(\varOmega) \ni \sigma \mapsto \log \! \varLambda^{(n)}(\sigma) \in \mathcal{L}(L_\diamond^2(\partial \varOmega))
\end{equation}
is Fr\'echet differentiable. The derivative at $\sigma \in L^\infty_+(\varOmega)$ in the direction $\eta \in L^\infty(\varOmega)$ is the element of $\mathcal{L}(L_\diamond^2(\partial \varOmega))$ defined via
\begin{equation}
\label{eq:derrepre}
D \log\!\varLambda^{(n)}(\sigma; \eta): f \mapsto \sum_{j=1}^n \sum_{k=1}^n \frac{\log(\mu_j) - \log(\mu_k)}{\mu_j - \mu_k} (f, \varphi_k) \big( D\varLambda(\sigma; \eta)\varphi_k, \varphi_j \big) \, \varphi_j
\end{equation}
where $D\varLambda(\sigma;\eta): L^2_\diamond(\partial \varOmega) \to L^2_\diamond(\partial \varOmega)$ is the Fr\'echet derivative of $\varLambda(\sigma)$ and the {\em divided difference} is extended for recurrent eigenvalues in the standard limit sense,~i.e.,
$$
\frac{\log(\mu_j) - \log(\mu_k)}{\mu_j - \mu_k} = \frac{1}{\mu_j} 
$$
if $\mu_j = \mu_k$.
\end{theorem}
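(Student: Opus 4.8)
The plan is to reduce everything to a finite-dimensional statement on the \emph{fixed} subspace $V := \mathrm{span}\{\psi_1,\dots,\psi_n\}$ and then to differentiate the holomorphic functional calculus for the logarithm, evaluating the resulting contour integral by residues. First I would observe that, since $P^{(n)}$ is a fixed bounded operator and $\varLambda$ is Fr\'echet differentiable into $\mathcal{L}(L^2_\diamond(\partial\varOmega))$, the compression
\[
A(\sigma) := P^{(n)}\varLambda(\sigma)P^{(n)}\big|_{V} \in \mathcal{L}(V)
\]
depends Fr\'echet differentiably on $\sigma \in L^\infty_+(\varOmega)$, with $DA(\sigma;\eta) = P^{(n)} D\varLambda(\sigma;\eta) P^{(n)}\big|_{V}$; moreover, by \eqref{eq:sadjoint} and \eqref{eq:positivity} the operator $A(\sigma)$ is self-adjoint and positive definite on the finite-dimensional space $V$, so its spectrum is exactly $\{\mu_1,\dots,\mu_n\}\subset\R_+$. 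Writing $\iota\colon\mathcal{L}(V)\to\mathcal{L}(L^2_\diamond(\partial\varOmega))$, $B\mapsto B P^{(n)}$, for the (linear, bounded) ``extension by zero on $V^\perp$'', the operator in \eqref{eq:logspectral2} is precisely $\log\!\varLambda^{(n)}(\sigma) = \iota\bigl(\log A(\sigma)\bigr)$, where $\log$ on the right denotes the holomorphic functional calculus on the open set $\{B\in\mathcal{L}(V):\sigma(B)\cap(-\infty,0]=\emptyset\}$, which for a diagonalizable $B$ amounts to applying the principal logarithm to the eigenvalues.

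Next I would use that the holomorphic functional calculus is Fr\'echet analytic. Fixing a positively oriented contour $\Gamma\subset\C\setminus(-\infty,0]$ that encircles $\sigma(A(\sigma))$ --- and hence also $\sigma(A(\widetilde\sigma))$ for all $\widetilde\sigma$ in an $L^\infty$-neighbourhood of $\sigma$, by continuity of the spectrum --- one has $\log A = \tfrac{1}{2\pi i}\oint_\Gamma \log(z)\,(zI-A)^{-1}\,dz$. Since $B\mapsto(zI-B)^{-1}$ is analytic with derivative $H\mapsto(zI-B)^{-1}H(zI-B)^{-1}$, uniformly for $z\in\Gamma$, differentiating under the integral sign shows that $B\mapsto\log B$ is Fr\'echet differentiable with
\[
D\log(B;H) \;=\; \frac{1}{2\pi i}\oint_\Gamma \log(z)\,(zI-B)^{-1}H(zI-B)^{-1}\,dz .
\]
Combining this with the differentiability of $A(\cdot)$ and the linearity of $\iota$ via the chain rule, the map \eqref{eq:logfmap} is Fr\'echet differentiable and, with $A=A(\sigma)$ and $\widetilde H := DA(\sigma;\eta) = P^{(n)}D\varLambda(\sigma;\eta)P^{(n)}\big|_{V}$,
\[
D\log\!\varLambda^{(n)}(\sigma;\eta)
= \iota\!\left(\frac{1}{2\pi i}\oint_\Gamma \log(z)\,(zI-A)^{-1}\,\widetilde H\,(zI-A)^{-1}\,dz\right),
\]
the integral being taken in $\mathcal{L}(V)$.

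To extract the explicit formula \eqref{eq:derrepre}, I would insert the spectral resolution $(zI-A)^{-1} = \sum_{k=1}^n (z-\mu_k)^{-1}(\,\cdot\,,\varphi_k)\,\varphi_k$ on $V$, which turns the integrand applied to $f$ into $\sum_{j,k} \log(z)\,(z-\mu_j)^{-1}(z-\mu_k)^{-1}\,(\widetilde H\varphi_k,\varphi_j)\,(f,\varphi_k)\,\varphi_j$, and then integrate term by term. By the residue theorem,
\[
\frac{1}{2\pi i}\oint_\Gamma \frac{\log(z)}{(z-\mu_j)(z-\mu_k)}\,dz
= \frac{\log(\mu_j)-\log(\mu_k)}{\mu_j-\mu_k}
\]
when $\mu_j\neq\mu_k$ (two simple poles), while for $\mu_j=\mu_k$ the double pole contributes $\tfrac{{\rm d}}{{\rm d}z}\log(z)\big|_{z=\mu_j} = \mu_j^{-1}$; this is exactly the divided difference with its stated limiting convention. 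Since $\varphi_j,\varphi_k\in V$ we have $P^{(n)}\varphi_k=\varphi_k$, so $(\widetilde H\varphi_k,\varphi_j) = (D\varLambda(\sigma;\eta)\varphi_k,\varphi_j)$, and applying $\iota$ merely replaces $f$ by $P^{(n)}f$ in the inner products $(f,\varphi_k)$, which leaves them unchanged; this reproduces \eqref{eq:derrepre}.

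I expect the step that needs the most care to be the bookkeeping around the fact that $\varLambda^{(n)}(\sigma)$, viewed on all of $L^2_\diamond(\partial\varOmega)$, has $0$ in its spectrum, so the $\log$-functional calculus cannot be applied there directly but only on the fixed subspace $V$, followed by extension by zero; relatedly, one must check that $\Gamma$ can be chosen independently of the base point on a whole neighbourhood of $\sigma$. By contrast, a possible coincidence $\mu_j=\mu_k$ of eigenvalues is \emph{not} an obstacle: it is handled automatically by the contour-integral (Daleckii--Krein type) representation, which is precisely why I would avoid differentiating individual eigenpairs of $\varLambda^{(n)}(\sigma)$.
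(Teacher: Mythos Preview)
Your proof is correct. Both you and the paper reduce to the fixed finite-dimensional subspace $V$, note that $\sigma\mapsto P^{(n)}\varLambda(\sigma)P^{(n)}$ inherits Fr\'echet differentiability from $\varLambda$, and then apply the chain rule together with the differentiability of the matrix logarithm to obtain the Daleckii--Krein divided-difference formula. The difference is one of presentation: the paper simply cites Higham's \emph{Functions of Matrices} (Theorem~3.11 and Corollary~3.12) for both the Fr\'echet differentiability of the principal matrix logarithm on matrices with spectrum avoiding $(-\infty,0]$ and the explicit divided-difference expression, whereas you derive these from scratch via the Riesz--Dunford contour integral $\log A = \tfrac{1}{2\pi i}\oint_\Gamma \log(z)(zI-A)^{-1}\,dz$ and a residue computation. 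Your route is more self-contained and makes transparent why repeated eigenvalues are harmless (the double-pole residue automatically yields the limiting value $\mu_j^{-1}$), and you are explicit about the bookkeeping required because $\varLambda^{(n)}(\sigma)$ vanishes on $V^\perp$, a point the paper leaves implicit by speaking directly of ``the logarithm of a matrix''. The paper's route is shorter and offloads the analytic work to a standard reference.
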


\begin{proof}
Obviously, the finite-dimensional approximation $\sigma \mapsto \varLambda^{(n)}(\sigma)$ inherits Fr\'echet differentiability from $\sigma \mapsto \varLambda(\sigma)$. In fact, the derivative of the former at $\sigma \in L^\infty_+(\varOmega)$ is simply 
\begin{equation}
\label{eq:finder}
L^\infty(\varOmega) \ni \eta  \mapsto P^{(n)} D\varLambda(\sigma;\eta) P^{(n)} \in \mathcal{L}(L^2_\diamond(\partial \varOmega)).
\end{equation}
The principal branch of a logarithm of a matrix (or of a finite-dimensional linear operator given in a fixed basis) is also Fr\'echet differentiable over the set of matrices that do not have eigenvalues on the closed negative real axis in the complex plain~\cite{Higham08}. The Fr\'echet differentiability of the mapping \eqref{eq:logfmap} thus follows from the chain rule for Banach spaces, which also gives the representation \eqref{eq:derrepre} when combined with \cite[Theorem~3.11 \& Corollary~3.12]{Higham08} and \eqref{eq:finder}.
\end{proof}

If one chooses the discretization frame to be $\psi_k = \phi_k \in H^{1/2}_\diamond(\partial \varOmega)$, $k=1, \dots, n$,~i.e.,~the first $n$ orthonormal eigenfunctions of $\varLambda(\sigma)$ for a particular, fixed $\sigma \in L^\infty_+(\varOmega)$, then 
\begin{equation}
  \label{eq:logspectral3}
  \log\!\varLambda^{(n)}(\sigma) = P^{(n)} \log\!\varLambda(\sigma) P^{(n)} \colon f \mapsto \sum_{k=1}^n \log(\lambda_k) (f,\phi_k) \, \phi_k,  
\end{equation}
that is, $\log\!\varLambda^{(n)}(\sigma)$ inherits its eigenvalues and eigenfunctions from $\log\!\varLambda(\sigma)$. In this case,
\begin{equation}
\label{eq:derrepre2}
D \log\!\varLambda^{(n)}(\sigma; \eta) \colon f \mapsto \sum_{j=1}^n \sum_{k=1}^n \frac{\log(\lambda_j) - \log(\lambda_k)}{\lambda_j - \lambda_k} (f, \phi_k) \big( D\varLambda(\sigma; \eta)\phi_k, \phi_j \big) \, \phi_j.
\end{equation}
Recall that here the basis for the discretization is fixed. In other words, the projection $P^{(n)}$ in \eqref{eq:logspectral3} is considered invariable when the Fr\'echet derivative is computed, that is, it corresponds all the time to the first $n$ orthonormal eigenfunctions of the unperturbed $\varLambda(\sigma)$, not to those of $\varLambda(\sigma+\eta)$.

\begin{remark}
\label{remark:Fder}
The derivative of the finite-dimensional logarithmic forward operator of EIT,
$$
L^{(n)} := \log\!\varLambda^{(n)} \circ \exp \colon \kappa \mapsto \log\!\varLambda^{(n)}(\mathrm{e}^\kappa), \quad L^\infty(\varOmega) \to \mathcal{L}(L^2_\diamond(\partial \varOmega))
$$
is obtained by simply replacing $D\varLambda(\sigma; \eta)$ with $D \varLambda_{\exp}(\kappa; \eta)$ of \eqref{eq:kappaderiv} in \eqref{eq:derrepre}.
\end{remark}

\section{Complete electrode model}
\label{sec:cem}

Practical EIT measurement setups are characterized by $M \in \N \setminus \{1\}$ electrodes that are attached to the surface of the imaged object.
Let $E_m \subset \partial \varOmega$, $m=1,\ldots,M$, denote these nonempty, open, connected, well-separated sets.
In the CEM, the Neumann boundary condition in \eqref{eq:eitcont} is replaced by~\cite{Cheng89}
\begin{align}
\label{eq:cem}
\sigma \frac{\partial u}{\partial \nu} = 0 \qquad & \text{on } \partial \varOmega \setminus \bigcup_{m=1}^M E_m, \\[-1mm]
\sigma \frac{\partial u}{\partial \nu} = \zeta_m (U_m-u) \qquad & \text{on } E_m, \quad m=1,\ldots,M, \\[1mm]
\int_{E_m} \sigma \frac{\partial u}{\partial \nu} \, {\rm d} S = I_m, \qquad & m=1,\ldots,M,
\end{align}
where $\zeta = [\zeta_m]_{m=1}^M \in \R_+^M$ denote the (constant) contact conductances at the electrode-object interfaces.
The current pattern $I = [I_m]_{m=1}^M$ is prescribed, whereas the electrode potentials $U = [U_m]_{m=1}^M$ are part of the solution. Both of these vectors belong to the mean-free subspace $\C_\diamond^M \subset \C^M$, the current pattern due to the conservation of electric charge and the electrode potentials as a result of our specific choice for the ground level of potential.  
The variational formulation corresponding to the conductivity equation from \eqref{eq:eitcont} with the boundary conditions \eqref{eq:cem} is to find $(u,U) \in \mathcal{H}^1(\varOmega) := H^1(\varOmega) \oplus \C^M_\diamond$ such that
\begin{equation}
\label{eq:cemvar}
\int_\varOmega \sigma \nabla u \cdot \nabla \overline{v} \, {\rm d} x + \sum_{m=1}^M \zeta_m \int_{E_m} (U_m-u)(\overline{V}_m-\overline{v}) \, {\rm d} S = I \cdot \overline{V}
\end{equation}
holds for all $(v,V) \in \mathcal{H}^1(\varOmega)$. The problem \eqref{eq:cemvar} is uniquely solvable~\cite{Somersalo92}.

For simplicity, we use the same notations for the forward operators of CEM as introduced previously for the continuum model in Section~\ref{sec:param}. However, as a disparity, the CEM forward operators depend on the contact conductances in addition to the conductivity. As an example, the CEM forward operator that takes the conductivity and contact conductances as its inputs is of the form
\begin{equation}
\varLambda \colon (\sigma,\zeta) \mapsto \varLambda(\sigma,\zeta), \quad L_+^\infty(\varOmega) \times \R_+^M \to \C^{(M-1) \times (M-1)},
\end{equation}
where the output is called the resistance matrix (regardless of the used input parametrization), which is given with respect to a fixed orthonormal basis for $\C_\diamond^M$. In other words, if $\alpha \in \C^{M-1}$ carries the coordinates of an electrode current pattern $I \in \C_\diamond^M$ with respect to the employed basis, then $\varLambda(\sigma, \zeta) \alpha$ gives the coordinates of the resulting electrode potentials $U\in \C_\diamond^M$ in that same basis. Since $\sigma$ and $\zeta$ are real-valued and positive, it follows easily from \eqref{eq:cemvar} that $\varLambda(\sigma, \zeta)$ is Hermitian and, in particular, positive-definite. Similarly, 
\begin{equation}
\label{eq:CEMfw}
\varLambda_{\rm inv} \colon (\rho, z) \mapsto \varLambda(\rho^{-1}, z^{-1}), \quad L_+^\infty(\varOmega) \times \R_+^M \to \C^{(M-1) \times (M-1)},
\end{equation}
and
\begin{equation}
\varLambda_{\rm exp} \colon (\kappa, \upsilon) \mapsto \varLambda({\rm e}^\kappa, {\rm e}^\upsilon), \quad L^\infty(\varOmega) \times \R^M \to \C^{(M-1) \times (M-1)},
\end{equation}
where we have abused the notation by defining $z^{-1} = [z_m^{-1}]_{m=1}^M$ and ${\rm e}^\upsilon = [{\rm e}^{\upsilon_m}]_{m=1}^M$ as vectors of $\R^M$. Finally, the logarithmic forward operator is defined in accordance with Definition~\ref{def:logarithmic}, that is, 
\begin{equation}
L \colon (\kappa, \upsilon) \mapsto \log\!\varLambda_{\exp}(\kappa, \upsilon), \quad L^\infty(\varOmega) \times \R^M \to \C^{(M-1) \times (M-1)},
\end{equation}
where $\log\!\varLambda_{\exp}(\kappa, \upsilon)$ is the principal logarithm of the positive-definite matrix $\varLambda_{\exp}(\kappa, \upsilon)$.

The Fr\'echet derivative of the basic forward operator \eqref{eq:CEMfw} with respect to $\sigma \in L^\infty_+(\varOmega)$ in the direction $\eta \in L^\infty(\varOmega)$ is characterized by~\cite{Kaipio00}
\begin{equation}
\label{eq:CEMder1}
\beta^{\rm T} D_\sigma\varLambda(\sigma, \zeta; \eta) \alpha = -\int_\varOmega \eta \nabla u_{\beta} \cdot \nabla u_{\alpha} \, {\rm d} x, \qquad \alpha, \beta \in \C^{M-1},
\end{equation}
where $(u_{\beta}, U_{\beta})$ and $(u_{\alpha}, U_\alpha)$ are the solutions of \eqref{eq:cemvar} corresponding to the current patterns with the coordinates $\alpha$ and $\beta$, respectively, in the employed orthonormal basis for $\C_\diamond^{M}$. Analogously, the derivative of $\zeta \mapsto \varLambda(\sigma, \zeta)$ in the direction $\xi \in \R^M$ can be assembled via
\begin{equation}
\label{eq:CEMder2}
\beta^{\rm T} D_\zeta \varLambda(\sigma, \zeta; \xi) \alpha = -\sum_{m=1}^M \xi_m \int_{E_m} \big( (U_{\beta})_m - u_{\beta} \big) \big( (U_{\alpha})_m - u_{\alpha} \big) \, {\rm d} S, \qquad \alpha, \beta \in \C^{M-1}.
\end{equation}
Furthermore, the Fr\'echet derivatives of $\varLambda_{\rm inv}$ and $\varLambda_{\exp}$ can be easily obtained by utilizing the chain rule and the above two formulas for $\varLambda$  (cf.~\eqref{eq:rhoderiv} and \eqref{eq:kappaderiv}), and those of the logarithmic forward operator $L$ can be deduced by writing an eigendecomposition for $\varLambda(\sigma,\zeta)$ and mimicking Theorem~\ref{thm:Fder} and Remark~\ref{remark:Fder}.

\begin{remark}
Applying the three basic operations ${\rm id}$, ${\rm inv}$ and $\exp$ to the conductivity and the contact conductances, one could come up with nine different parametrizations for the forward operator of the CEM. Here we only consider three of them, although it is probable that in some cases one of the other options is optimal from the standpoint of the linearization error.
\end{remark}

\begin{remark}
\label{remark:high_resistance}
When the contact conductances are perfect, i.e., in the limit $\zeta_m \to \infty$ for all $m = 1, \dots, M$, the resulting model is called the shunt model~\cite{Cheng89,Darde16}.
On the other hand, when $\sigma/\zeta_m$ approaches infinity for all $m = 1, \dots, M$, the model formally approaches a resistor network for which the resistance between electrodes $E_l$ and $E_m$ is $R_{lm} = (\zeta_l |E_l|)^{-1} + (\zeta_m |E_m|)^{-1}$. In particular, if $\zeta_l^{-1} = \zeta_m^{-1} =: \tilde{z}$ and $|E_m| = |E_l| =: A$, then $R_{lm} = 2 \tilde{z} A^{-1}$. Hence, for high contact resistances $z$ and low resistivity $\rho$, it is reasonable to expect that of the different forward operators $\varLambda_{\rm inv}$ is closest to being linear.
\end{remark}

\section{Numerical experiments}
\label{sec:numerics}

In this section, we study the accuracies of different linearization techniques by performing numerical experiments in the unit disk $\varOmega \subset \R^2$.
The elliptic forward problems for both the continuum model and the CEM are solved with the finite element method (FEM) on meshes with approximately $30\,000$ nodes and piecewise linear basis functions. These meshes and corresponding FEM solutions are also employed when evaluating derivatives of forward maps (cf.,~e.g.,~\eqref{eq:sigmaderiv}--\eqref{eq:kappaderiv}).
The discretization errors of the FEM solutions can be regarded to be small.
Throughout this section, the notation $\E[\cdot]$ is used to denote the sample mean.
The size of the sample is $50\,000$ in each considered case and the random realizations of involved parameters are mutually independent.

\subsection{Forward accuracy}
\label{ssec:forward}

We study the linearization errors of the EIT forward problem with discrete lognormal random conductivity fields.
To this end, the domain $\varOmega$ is divided into $N=1800$ subdomains $\varOmega_i$, each having approximately the same area, and the piecewise constant conductivity is written as
\begin{equation}
\label{eq:discrcond}
\sigma(x) = \sum_{i=1}^N \hat{\sigma}_i \chi_i(x),
\end{equation}
where $\chi_i$ is the indicator function of $\varOmega_i$ and the vector $\hat{\sigma} \in \R_+^N$ contains the corresponding conductivity values.
The resistivity $\rho = \sigma^{-1}$ and the logarithm of the conductivity $\kappa = \log(\sigma)$ are defined similarly via vectors $\hat{\rho} \in \R_+^N$ and $\hat{\kappa} \in \R^N$, respectively.
Furthermore, let $\hat{x}_i \in \varOmega_i$ denote the center of a subdomain.
The random conductivities are drawn via their logarithms such that the vectors $\hat{\kappa}$ follow a Gaussian distribution with the probability density
\begin{equation}
p(\hat{\kappa}) = \frac{1}{\sqrt{(2 \pi)^N \lvert \varGamma \rvert}} \exp\mathopen{}\left( -\frac{1}{2}(\hat{\kappa} - \hat{\kappa}_0)^\top \varGamma^{-1} (\hat{\kappa} - \hat{\kappa}_0) \right)\mathclose{},
\end{equation}
where $\hat{\kappa}_0 \in \R^N$ specifies the discrete mean field and
\begin{equation}
\label{eq:covmat}
\varGamma_{i,j} = \varsigma^2 \exp\mathopen{}\left( -\frac{\lVert \hat{x}_i-\hat{x}_j \rVert_2^2}{2\ell^2} \right)\mathclose{}, \qquad i,j=1,\ldots,N
\end{equation}
defines the covariance matrix for some variance parameter $\varsigma^2>0$ and correlation length $\ell>0$.
Table~\ref{tab:randfields} lists the used values for four different distributions.

First we consider the continuum model.
For the purpose of numerical computations, the finite-dimensional approximation $\varLambda^{(n)}(\sigma)$ in \eqref{eq:lambdan} is considered with $n=16$ trigonometric basis functions 
\begin{equation}
\label{eq:discbasis}
\psi_{2k-1}(\theta) = \frac{1}{\sqrt{\pi}} \cos(k\theta), \quad \psi_{2k}(\theta) = \frac{1}{\sqrt{\pi}} \sin(k\theta), \qquad k=1,\ldots,8,
\end{equation}
where $\theta$ denotes the polar angle. 
In order to obtain a fully discrete forward operator, the conductivity $\sigma$ is replaced by $\hat{\sigma}$, resulting in
\begin{equation}
\label{eq:fulldiscr}
\hat{\varLambda} \colon \hat{\sigma} \mapsto \hat{\varLambda}(\hat{\sigma}), \quad \R_+^N \to \R^{n \times n},
\end{equation}
where $\hat{\varLambda}(\hat{\sigma})$ is the matrix representation of $P^{(n)} \varLambda(\sigma) P^{(n)}$ for the conductivity \eqref{eq:discrcond}; see~\eqref{eq:projspan}.
This nonlinear matrix-valued function is linearized around the unit conductivity $\sigma_0 \equiv 1$ as
\begin{equation}
\label{eq:lambdalin}
\hat{\varLambda}(\hat{\sigma}) \approx \hat{\varLambda}_\text{lin}(\hat{\sigma}) \coloneqq \hat{\varLambda}(\hat{\sigma}_0) + \hat{\varLambda}'(\hat{\sigma}_0)(\hat{\sigma}-\hat{\sigma}_0),
\end{equation}
where $\hat{\varLambda}'(\hat{\sigma}) \hat{\eta}$ is the Fr\'echet derivative of the mapping \eqref{eq:fulldiscr} in the direction $\hat{\eta} \in \R^N$. To be more precise,
\begin{equation}
\label{eq:discrderiv}
\hat{\varLambda}'(\hat{\sigma}) \hat{\eta} \, = \, \sum_{i=1}^N \hat{\eta}_i P^{(n)} D\varLambda(\sigma;\chi_i) P^{(n)},
\end{equation}
which can be evaluated with the help of \eqref{eq:sigmaderiv}; see also \eqref{eq:finder}.
The alternative fully discrete forward operators $\hat{\rho} \mapsto \hat{\varLambda}_{\rm inv}(\hat{\rho})$ and $\hat{\kappa} \mapsto \hat{\varLambda}_{\exp}(\hat{\kappa})$ are defined analogously, as are their linearizations around $\rho_0 \equiv 1$ and $\kappa_0 \equiv 0$, respectively, with the appropriate derivative from \eqref{eq:rhoderiv} or \eqref{eq:kappaderiv} replacing $D\varLambda(\sigma;\chi_i)$ in \eqref{eq:discrderiv}. The same conclusions also apply to the fully discrete logarithmic forward operator $\hat{\kappa} \mapsto \hat{L}(\hat{\kappa})$; cf.~Definition \ref{def:logarithmic}, Theorem~\ref{thm:Fder} and Remark~\ref{remark:Fder} with $\eta = \chi_i$.
Note that for a rotationally symmetric conductivity, \eqref{eq:discbasis} are eigenfunctions of the Neumann-to-Dirichlet operator, meaning that $\varphi_k = \phi_k = \psi_k$ in the notation of Section~\ref{sec:log} (cf.~\cite{Isaacson1986}). In particular, \eqref{eq:logspectral3} and \eqref{eq:derrepre2} hold.

\begin{table}
  \caption{Parameters defining the discrete random conductivity fields F1--F4 and the random contact conductances C1--C2 used in the numerical experiments.}
  \begin{center}
  \begin{tabular}{l | c | c | c | c | c}
  & $N$ & $\hat{\kappa}_0$ & $\varsigma^2$ & $\ell$ & $\E[\lVert \kappa \rVert_{L^2(\varOmega)}]$ \bigstrut[b] \\
  \hline
  F1 & $1800$ & $0$ & $1/4$ & $1/3$ & $1.53$ \bigstrut[t] \\
  F2 & $1800$ & $0$ & $1/4$ & $2/3$ & $1.48$ \\
  F3 & $1800$ & $0$ & $1$ & $1/3$ & $3.08$ \\
  F4 & $1800$ & $0$ & $1$ & $2/3$ & $2.95$ \\
  \end{tabular}
  \quad
  \begin{tabular}{l | c | c | c | c }
  & $M$ & $\upsilon_0$ & $\gamma^2$ \bigstrut[b] \\
  \hline
  C1 & $16$ & $\log(10)$ & $1$ \bigstrut[t] \\
  C2 & $16$ & $\log(1000)$ & $1$ \\ \\ \\
  \end{tabular}
  \end{center}
  \label{tab:randfields}
\end{table}

The error indicator for the linearized $\hat{\varLambda}$ is  
\begin{equation}
e(\hat{\varLambda}) \coloneqq \E\mathopen{}\left[ \frac{ \lVert \hat{\varLambda}_\text{lin}(\hat{\sigma}) - \hat{\varLambda}(\hat{\sigma}) \rVert_F}{\lVert \hat{\varLambda}(\hat{\sigma})\rVert_F} \right]\mathclose{},
\end{equation}
where $\lVert \cdot \rVert_F$ is the Frobenius norm.
The corresponding definitions of indicators for $\hat{\varLambda}_{\rm inv}$ and $\hat{\varLambda}_{\exp}$ should be obvious. 
For the logarithmic forward operator, the error is computed as
\begin{equation}
e(\hat{L}) \coloneqq \E\mathopen{}\left[ \frac{ \lVert \exp(\hat{L}_\text{lin}(\hat{\kappa})) - \hat{\varLambda}(\hat{\sigma}) \rVert_F}{\lVert \hat{\varLambda}(\hat{\sigma})\rVert_F} \right]\mathclose{},
\end{equation}
where $\exp$ is just the ordinary matrix exponent,~i.e.,~the inverse of (the principal branch of) the matrix logarithm.

The left-hand side of Table \ref{tab:forwerr} lists the mean errors for the four linearization approaches and for the four different random fields defined in Table~\ref{tab:randfields}.
It can be seen that linearizing with respect to the conductivity results in the worst accuracy in each case, whereas the novel logarithmic linearization method is clearly the best.
Increasing the correlation length in the random field deteriorates the performance of the conductivity and log-conductivity linearizations, whereas $\hat{\varLambda}_{\rm inv}$ and $\hat{L}$ perform better with the smoother fields. An intuitive explanation for the latter phenomenon is that increasing the correlation length in our random model makes the corresponding realizations of the conductivity to be closer to constants, for which $\hat{\varLambda}_{\rm inv}$ and $\hat{L}$ are linear and affine, respectively; see Examples \ref{ex:const} and \ref{ex:const2}.  It is not surprising that increasing the pointwise variance $\varsigma^2$ leads in every case to a higher sample mean for the linearization error.

\begin{table}
  \caption{Linearization errors $e$ for the continuum forward model (left) and for the CEM (right) with the different random distributions defined in Table~\ref{tab:randfields}. The forward operators parametrized with respect to conductivity, resistivity and log-conductivity are denoted by $\hat{\varLambda}$, $\hat{\varLambda}_{\rm inv}$ and $\hat{\varLambda}_{\exp}$, respectively, whereas the logarithmic forward operator is $\hat{L}$.}
  \begin{center}
  \begin{tabular}{l | c | c | c | c }
  & $e(\hat{\varLambda})$ & $e(\hat{\varLambda}_{\rm inv})$ & $e(\hat{\varLambda}_{\exp})$ & $e(\hat{L})$ \bigstrut[b] \\
  \hline
  F1 & $0.237$ & $0.076$ & $0.095$ & $0.039$ \bigstrut[t] \\
  F2 & $0.294$ & $0.043$ & $0.123$ & $0.027$ \\
  F3 & $0.989$ & $0.288$ & $0.327$ & $0.142$ \\
  F4 & $1.481$ & $0.146$ & $0.434$ & $0.094$ \\
  \end{tabular}
  \quad
  \begin{tabular}{l | c | c | c | c }
  & $e(\hat{\varLambda})$ & $e(\hat{\varLambda}_{\rm inv})$ & $e(\hat{\varLambda}_{\exp})$ & $e(\hat{L})$ \bigstrut[b] \\
  \hline
  F3/C1 & $0.916$ & $0.175$ & $0.390$ & $0.270$ \bigstrut[t] \\
  F4/C1 & $1.012$ & $0.090$ & $0.418$ & $0.269$ \\
  F3/C2 & $1.003$ & $0.272$ & $0.345$ & $0.132$ \\
  F4/C2 & $1.420$ & $0.139$ & $0.430$ & $0.090$ \\
  \end{tabular}
  \end{center}
  \label{tab:forwerr}
\end{table}

For the CEM experiments we employ the same lognormal conductivity fields F3--F4 as with the continuum model, but now the contact conductances $\zeta_m$ are also random. The conductances are drawn independently and for each of the $M=16$ electrodes the conductance distribution is lognormal with the underlying Gaussian distribution having mean $\upsilon_0$ and variance $\gamma^2$ as listed on the right in Table~\ref{tab:randfields}. The identical electrodes are positioned uniformly and they cover approximately $46 \%$ of the boundary $\partial \varOmega$. The orthonormal electrode current basis for $\R_\diamond^M$, employed when forming the resistance matrices, is the discrete counterpart of the Fourier basis \eqref{eq:discbasis}, that is,
$$
I^{(2k - 1)} = \sqrt{\frac{2}{M}} \, [\cos(k \theta_m)]_{m=1}^{M}, \quad I^{(2k)} = \sqrt{\frac{2}{M}}\, [\sin(k \theta_m)]_{m=1}^{M}, \qquad k=1,\ldots,M/2-1,
$$
and $I^{(M-1)} = [(-1)^{m-1}]_{m=1}^M/\sqrt{M}$. Here, $\theta_m = 2\pi(m-1)/M$ is the central polar angle of the $m$th electrode.  The definitions of the fully discrete CEM forward operators, their linearizations and the corresponding error indicators should be obvious.
In particular, the forward operators are now mappings from $\R_+^{N+M}$ or $\R^{N+M}$ to $\R^{(M-1) \times (M-1)}$ and their derivatives can be assembled using the appropriate variants of \eqref{eq:CEMder1}, \eqref{eq:CEMder2} and \eqref{eq:derrepre2}.

For the contact conductance distribution C1 with the lower expected value, the resistivity-based operator $\varLambda_{{\rm inv}}$ results in the best linearization accuracy, as can be seen on the right in Table~\ref{tab:forwerr}. This is in line with Remark~\ref{remark:high_resistance}. On the other hand, when the contact conductance is high (i.e., the setting is close to the shunt model), the errors almost coincide with the corresponding values for the continuum model.
That is, the rows corresponding to C2 are similar to the adjacent rows on the left-hand side of Table~\ref{tab:forwerr}. In particular, the logarithmic forward operator $\hat{L}$ gives the best and the conductivity-based parametrization the worst linearization accuracy for the CEM as well.

\subsection{Inverse accuracy}
\label{ssec:inverse}

Regarding the accuracy of linearizations when solving the inverse problem of EIT, we only consider the forward operators $\hat{\varLambda}_{\exp}$ and $\hat{L}$ that are based on the logarithmic input parametrization.
One reason for this is the fact that non-logarithmic parametrizations may lead to conductivity reconstructions that are not positive.
In addition, it would be problematic to design equivalent regularizations for different input parametrizations, and thus the error indicators would not be directly comparable to each other.
In the following, we give all definitions for the logarithmic forward operator $\hat{L}$, but the corresponding definitions for $\hat{\varLambda}_{\exp}$ should be obvious.

The simulated finite-dimensional Neumann-to-Dirichlet operators are denoted by $\widetilde{\varLambda}(\hat{\sigma})$.
They are also computed with FEM, but in order to avoid inverse crimes, the matrices $\widetilde{\varLambda}(\hat{\sigma})$ are contaminated with Gaussian noise.
More precisely, realizations of independent, zero-mean normal random variables with standard deviation
\begin{equation}
\delta \coloneqq 10^{-3} \E \mathopen{}\left[ \max_{i,j} \hat{\varLambda}(\hat{\sigma})_{i,j} \right]\mathclose{}
\end{equation}
are added to each element of a matrix $\hat{\varLambda}(\hat{\sigma})$, and the matrix is subsequently symmetricized.
The simulated CEM measurements are obtained in the same way and they are denoted by $\widetilde{\varLambda}(\hat{\sigma},\zeta)$.

For a regularization parameter $t>0$, the reconstruction $\hat{\kappa}_t$ based on the logarithmic continuum model forward operator $\hat{L}$ is 
\begin{equation}
\label{eq:map}
\hat{\kappa}_t(\hat{L}) = \argmin_{\hat{y} \in \R^N} \mathopen{}\left\{ \big\lVert \hat{L}_\text{lin}(\hat{y}) - \log(\widetilde{\varLambda}(\hat{\sigma}))\big\rVert_F^2 + t^2 \big\lVert G (\hat{y}-\hat{\kappa}_0) \big\rVert_2^2 \right\}\mathclose{},
\end{equation}
where $G \in \R^{N \times N}$ is a `Bayesian regularization matrix' satisfying $\varGamma^{-1} = G^\top G$.
Here, $\varGamma$ is the covariance matrix \eqref{eq:covmat} of the distribution from which the logarithm of the vector $\hat{\sigma}$ is drawn, and the expected log-conductivity $\hat{\kappa}_0$ is as in Section~\ref{ssec:forward}. The reconstruction obtained from \eqref{eq:map} approximates a \emph{maximum a posteriori} (MAP) estimate under the assumption that the linearization error is negligible and the elements of the matrix $\log(\widetilde{\varLambda}(\hat{\sigma}))$ are contaminated by independent realizations of zero-mean Gaussian noise with standard deviation $t>0$ \cite{Kaipio04a}. (Notice that the latter is not true for any $t$ due to the application of the matrix logarithm after adding the measurement noise.)
Naturally, for $\hat{\varLambda}_{\exp}$ the minimization does not involve a logarithm, but the linearized operator is compared directly to the matrix $\widetilde{\varLambda}(\hat{\sigma})$ in the Frobenius norm.
For the CEM forward operators, the reconstructions are computed according to
\begin{equation}
\label{eq:cemmap}
\big(\hat{\kappa}_t(\hat{L}), \upsilon_t(\hat{L})\big) = \argmin_{\hat{y} \in \R^N, \, w \in \R^M} \mathopen{}\left\{ \big\lVert \hat{L}_\text{lin}(\hat{y},w) - \log(\widetilde{\varLambda}(\hat{\sigma},\zeta))\big\rVert_F^2 + t^2 \big\lVert G (\hat{y}-\hat{\kappa}_0) \big \rVert_2^2 \right\}\mathclose{}
\end{equation}
or by replacing $\hat{L}_\text{lin}(\hat{y},w)$ with $(\hat{\varLambda}_{\exp})_{\rm lin}(\hat{y}, w)$ and deleting the logarithm. In particular, no regularization is applied to the contact conductances since their estimation is a relatively stable task (cf, e.g.,~\cite[(4.8)]{Harhanen15}). Observe that \eqref{eq:map} and \eqref{eq:cemmap} correspond to minimizing quadratic Tikhonov functionals, which is a simple task.

The average reconstruction error for a fixed regularization parameter $t>0$ is
\begin{equation}
\iota_t(\hat{L}) \coloneqq \E \mathopen{}\left[ \big\lVert \kappa_t(\hat{L}) - \kappa \big\rVert_{L^2(\varOmega)} \right]\mathclose{},
\end{equation}
where $\kappa = \log(\sigma)$ is the (random draw of a) true log-conductivity corresponding to the vector $\hat{\sigma}$ in \eqref{eq:map}; see~\eqref{eq:discrcond}.
For a given conductivity/conductance distribution, the optimal regularization parameter is obtained by solving a one-dimensional optimization problem,
\begin{equation}
\tau(\hat{L}) \coloneqq \argmin_{t \in \R_+} \iota_t(\hat{L}),
\end{equation}
and the corresponding error indicator is simply denoted by $\iota(\hat{L}) \coloneqq \iota_{\tau(\hat{L})}(\hat{L})$. (Naturally, such an optimal regularization parameter cannot be found in practice when the true conductivity is not known, but the idea here is to compare upper bounds for the performance of the two considered one-step reconstruction algorithms.)
In addition, for the CEM we denote the contact conductance reconstruction error by
\begin{equation}
d(\hat{L}) \coloneqq \E\mathopen{}\left[ \lVert \upsilon_{\tau(\hat{L})}(\hat{L}) - \upsilon \rVert_2 \right]\mathclose{},
\end{equation}
where $\upsilon = \log(\zeta)$ is defined componentwise, but it should be emphasized that the optimal regularization parameter $\tau(\hat{L})$ is still chosen so that it minimizes the $L^2(\varOmega)$ reconstruction error corresponding to the mere log-conductivity.  Once again, the corresponding definitions for $\varLambda_{\exp}$ are analogous.

\begin{table}
  \caption{Conductivity reconstruction errors $\iota$ for the linearized continuum model (left) and for the linearized CEM (right), together with the conductance errors $d$ for the CEM. The considered random fields and parameters are described in Table~\ref{tab:randfields}.}
  \begin{center}
  \begin{tabular}{l | c | c }
  & $\iota(\hat{\varLambda}_{\exp})$ & $\iota(\hat{L})$ \bigstrut[b] \\
  \hline
  F1 & $0.583$ & $0.362$ \bigstrut[t] \\
  F2 & $0.524$ & $0.103$ \\
  F3 & $2.134$ & $1.028$ \\
  F4 & $2.000$ & $0.283$ \\
  \end{tabular}
\quad 
  \begin{tabular}{l | c | c | c | c}
  & $\iota(\hat{\varLambda}_{\exp})$ & $\iota(\hat{L})$ & $d(\hat{\varLambda}_{\exp})$ & $d(\hat{L})$ \bigstrut[b] \\
  \hline
  F3/C1 & $1.847$ & $1.477$ & $8.266$ & $3.404$ \bigstrut[t] \\
  F4/C1 & $1.866$ & $0.775$ & $8.034$ & $2.889$ \\
  F3/C2 & $1.944$ & $1.346$ & $421.3$ & $164.6$ \\
  F4/C2 & $1.880$ & $0.568$ & $359.4$ & $79.98$ \\
  \end{tabular}
  \end{center}
  \label{tab:inverr}
\end{table}

Table~\ref{tab:inverr} reveals that the new logarithmic method is more accurate in each test case, and the difference is highlighted with the smoother fields F2 and F4, as could have been predicted based on the forward accuracy observations in Table~\ref{tab:forwerr}.
In practice, the choice of regularization parameter may well be suboptimal.
However, Figure~\ref{fig:tau} demonstrates that the new method results in a smaller average error with almost any choices for the regularization parameters.

\begin{figure}
\center{
{\includegraphics[scale=.5]{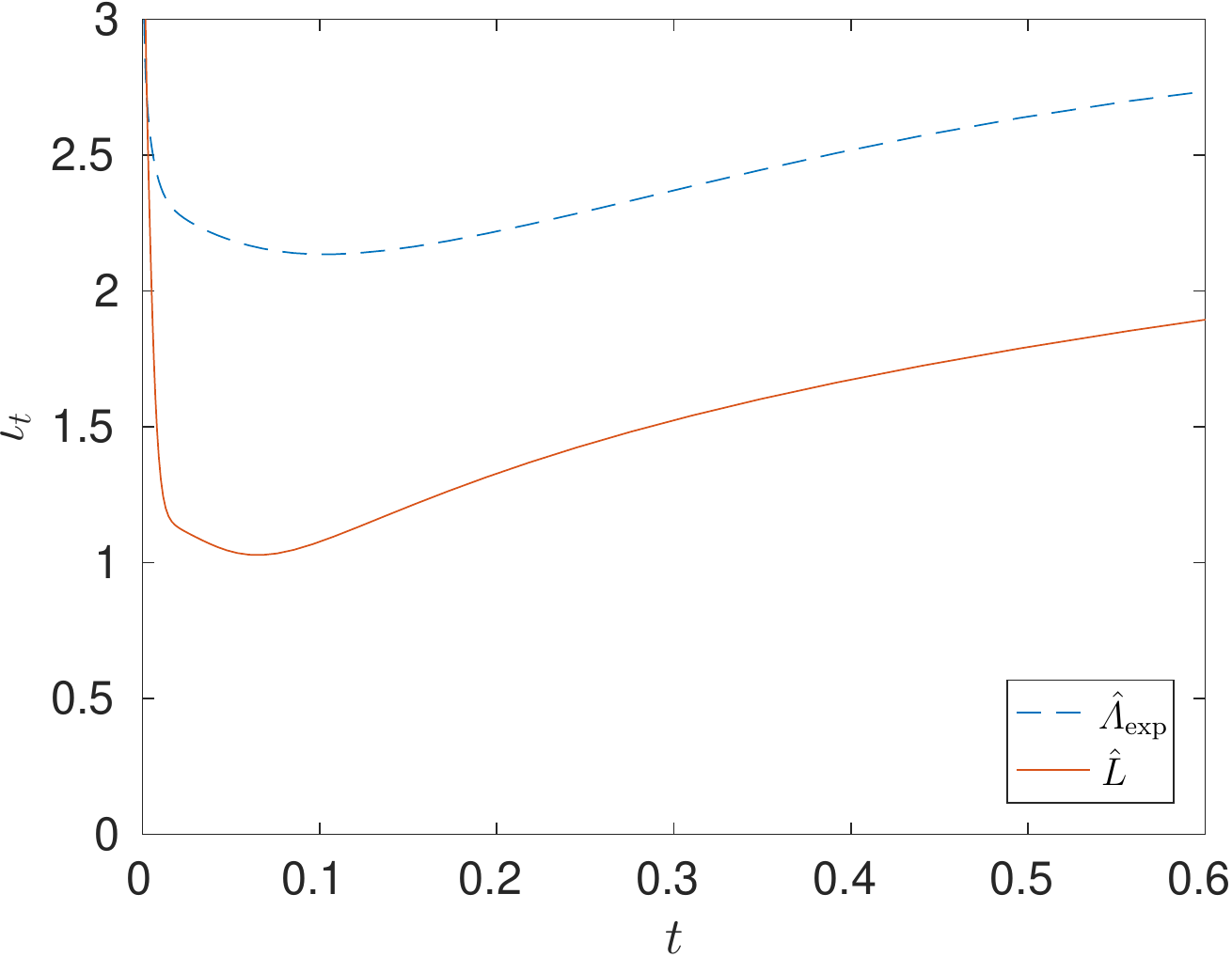}}
\quad
{\includegraphics[scale=.5]{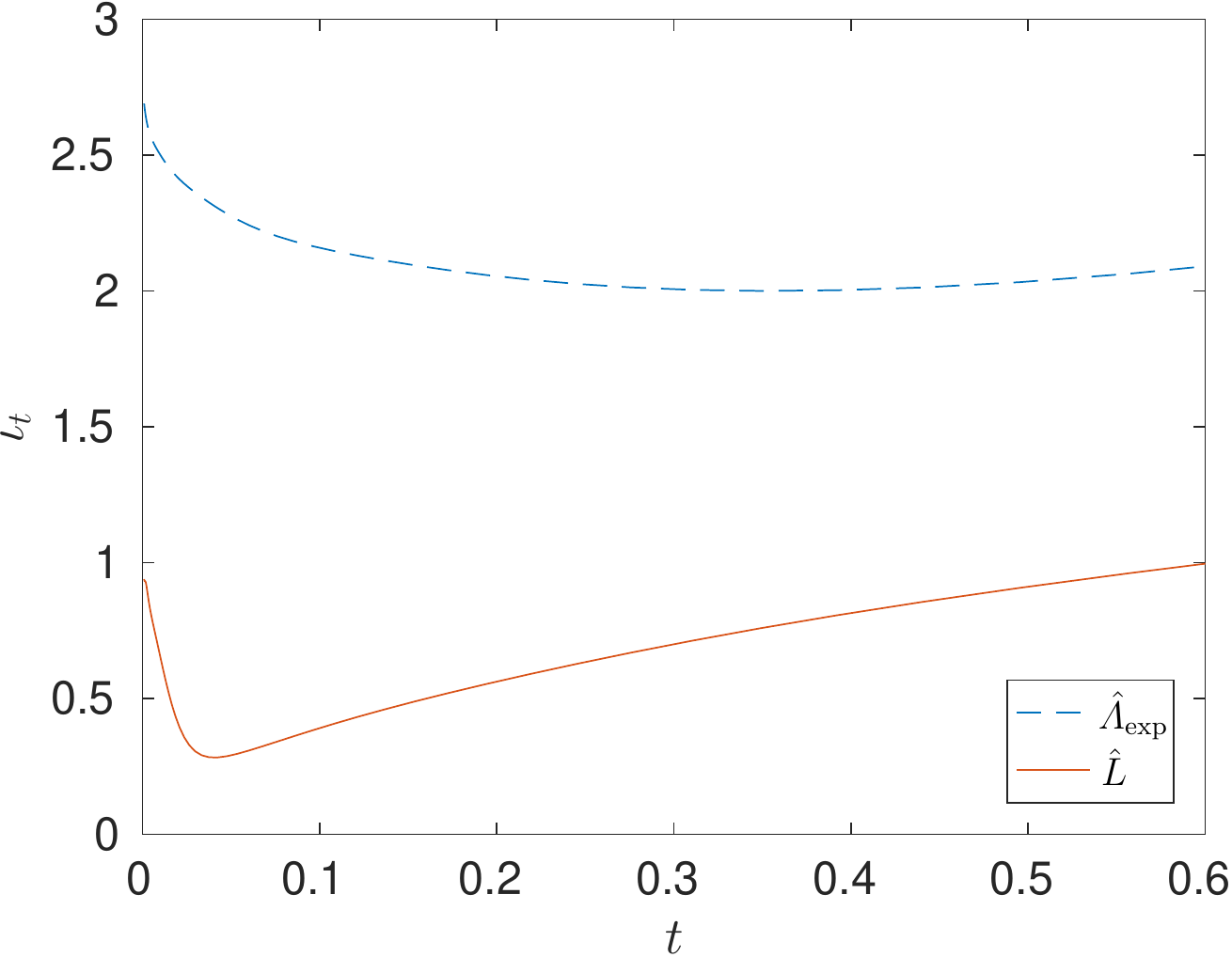}}
}
\caption{Mean reconstruction errors $\iota_t$ as functions of the regularization parameter $t$ for the linearization methods based on the continuum forward operators $\hat{\varLambda}_{\exp}$ (dashed) and $\hat{L}$ (solid). The plots correspond to the random fields F3 (left) and F4 (right).}
\label{fig:tau}
\end{figure}

In order to illustrate the difference in the one-step inversion accuracies for $\hat{\varLambda}_{\exp}$ and $\hat{L}$, some example reconstructions, with regularization parameters that are optimized as described above, are shown in Figures \ref{fig:f3c2recos} and \ref{fig:f4c2recos}.
The first example, which is based on a realization of the random conductivity field F3 and the CEM with high contact conductances, demonstrates that the traditional linearization method is not capable of recovering the high contrast of the target conductivity. On the other hand, the new method reproduces the conductivity levels far more accurately and also recovers the resistive area close to the northwest boundary of the unit disk.
The second example presented in Figure~\ref{fig:f4c2recos} considers a realization of the random conductivity field F4 with twice as long correlation length. The superiority of the new logarithmic method is even more clearly visible with this smoother conductivity field, as was expected based on the statistical evidence in Tables \ref{tab:forwerr} and \ref{tab:inverr}.

\begin{figure}
\center{
{\includegraphics[scale=.45]{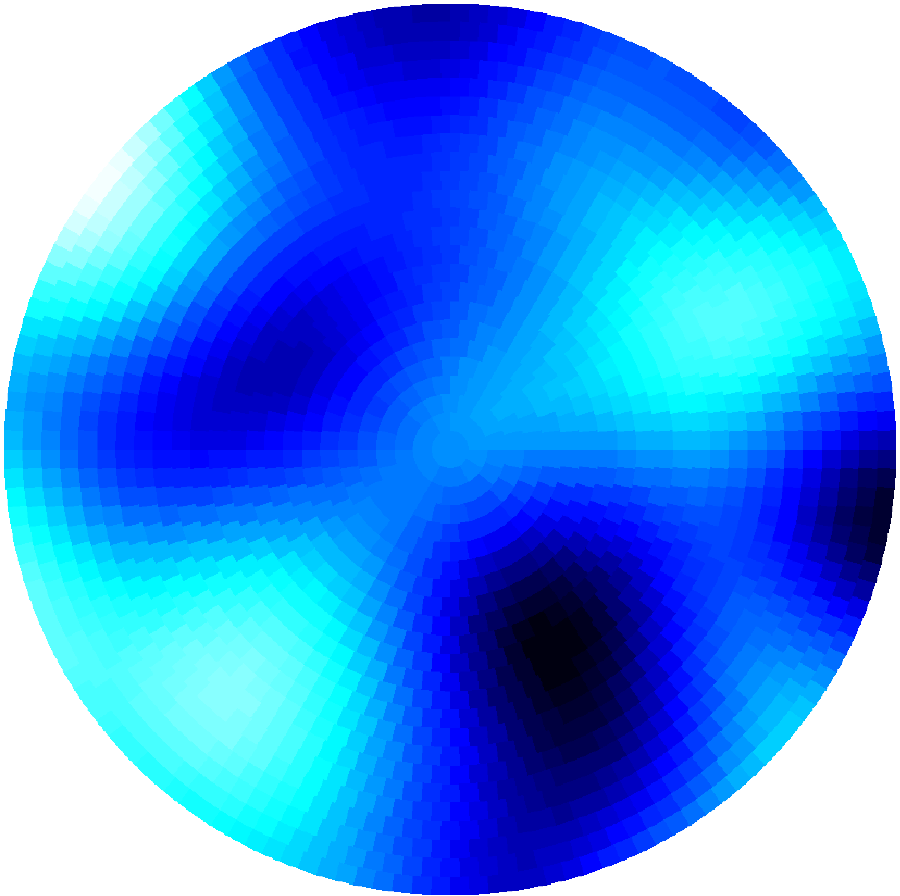}}
\quad
{\includegraphics[scale=.45]{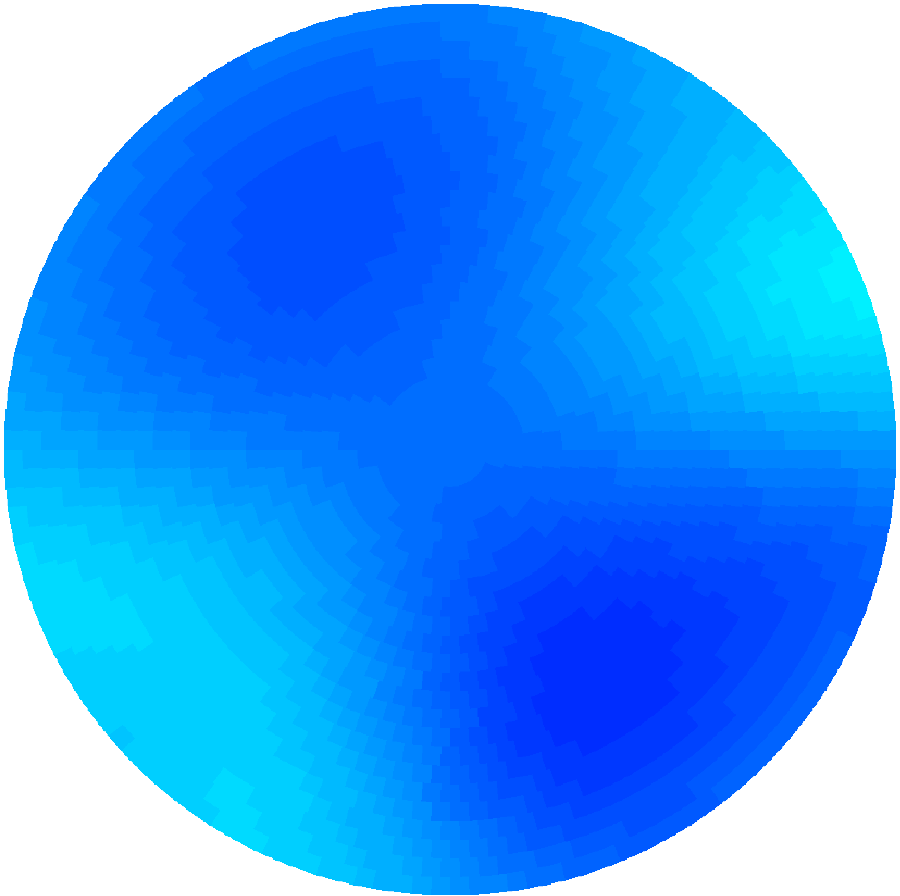}}
\quad
{\includegraphics[scale=.45]{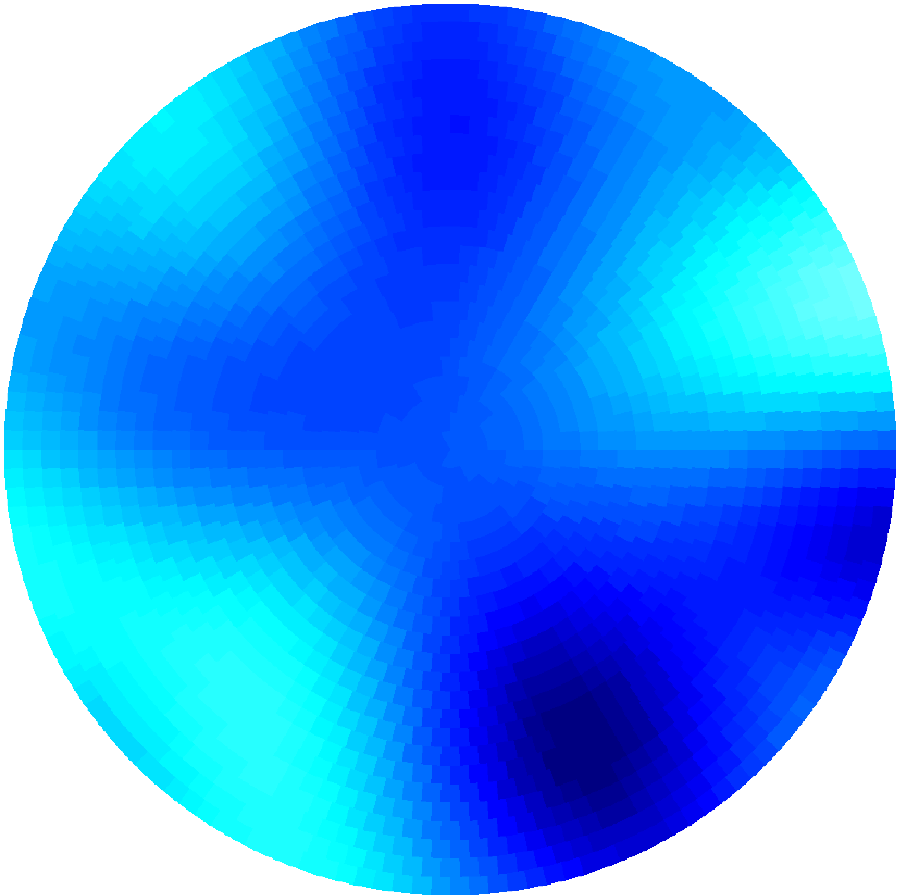}}
\quad
{\includegraphics[scale=.45]{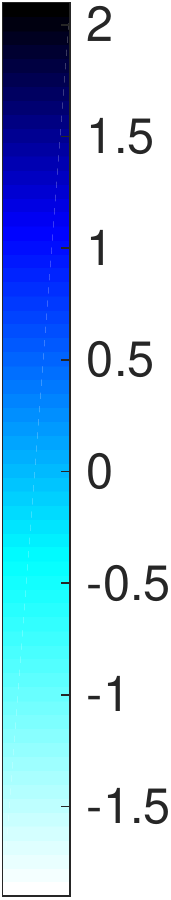}}
}
\caption{Example reconstructions for the CEM and a realization of the random conductivity/conductances F3/C2. Left:\ target log-conductivity with $\lVert \kappa \rVert_{L^2(\varOmega)} \approx 2.705$. Middle:\ reconstruction based on a single linearization of $\hat{\varLambda}_{\exp}$ with $L^2(\varOmega)$-error $1.863$. Right:\ reconstruction based on a single linearization of $\hat{L}$ with $L^2(\varOmega)$-error $1.292$.}
\label{fig:f3c2recos}
\end{figure}

\begin{figure}
\center{
{\includegraphics[scale=.45]{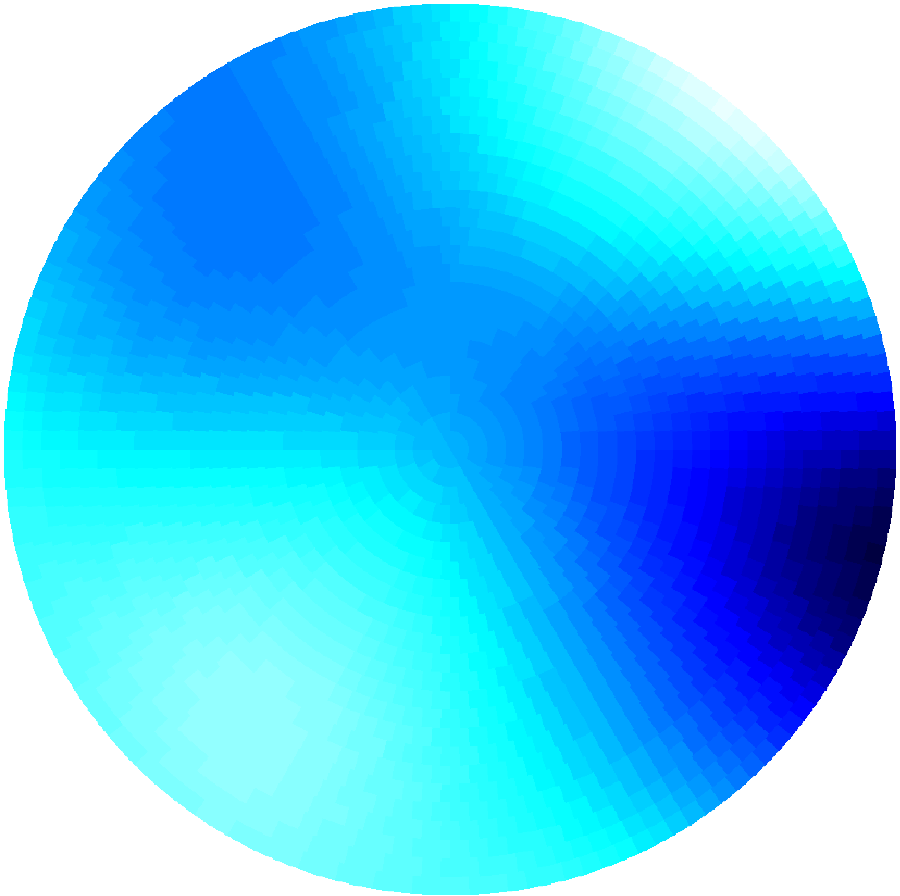}}
\quad
{\includegraphics[scale=.45]{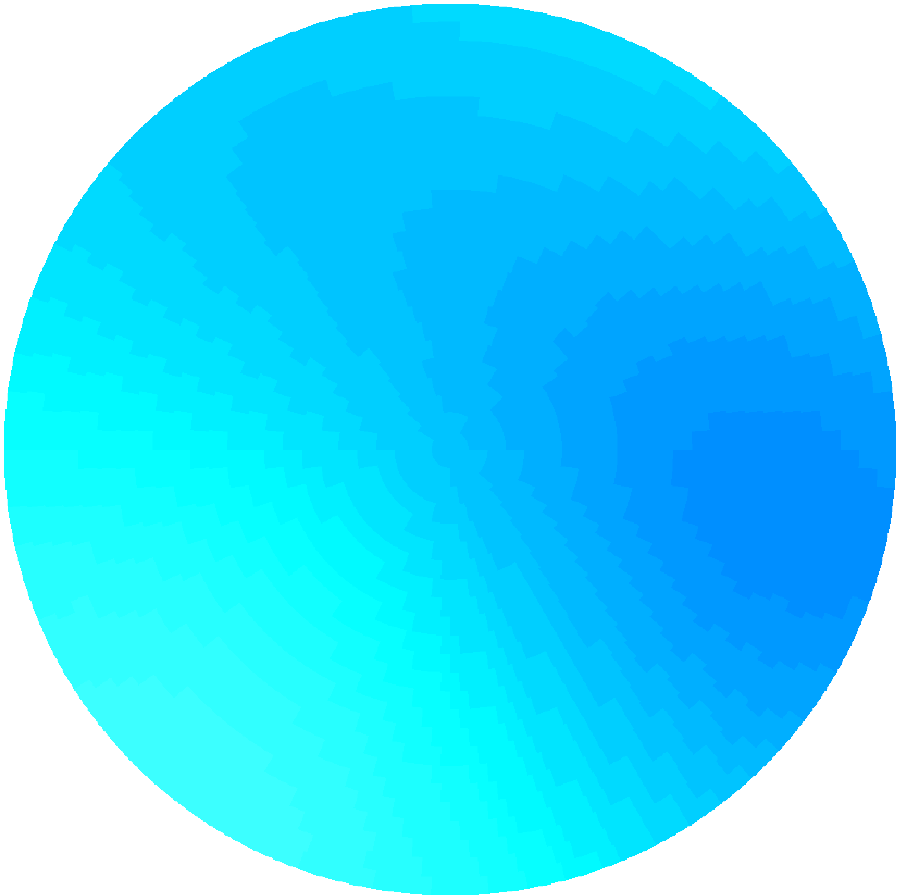}}
\quad
{\includegraphics[scale=.45]{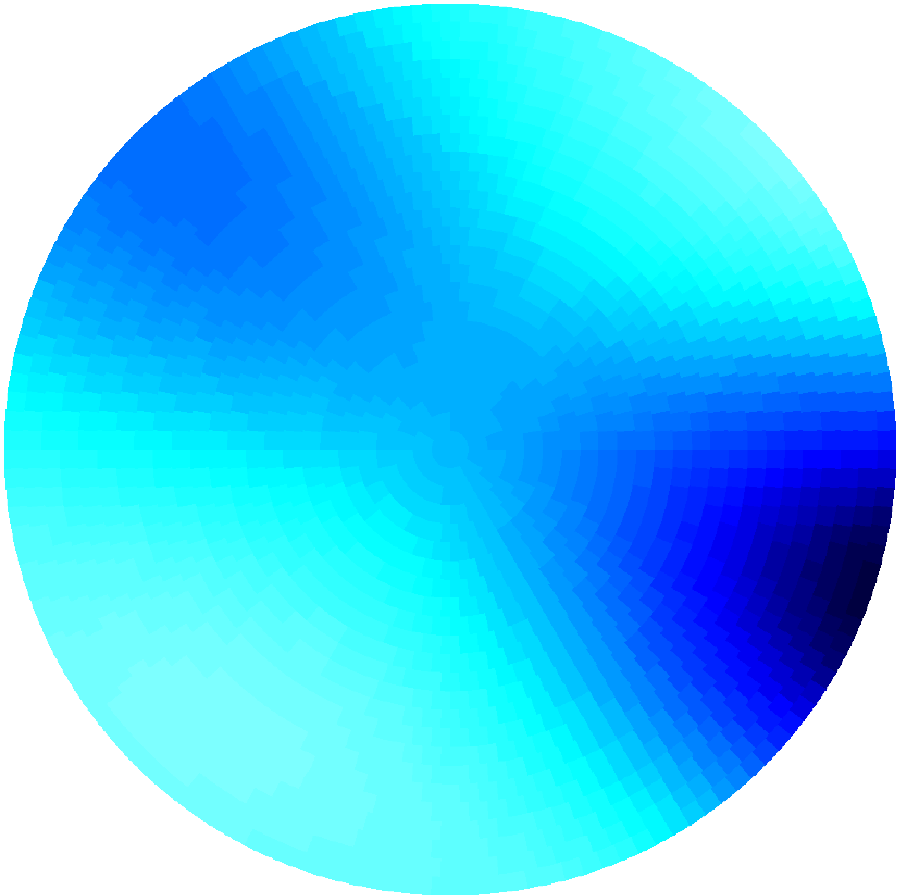}}
\quad
{\includegraphics[scale=.45]{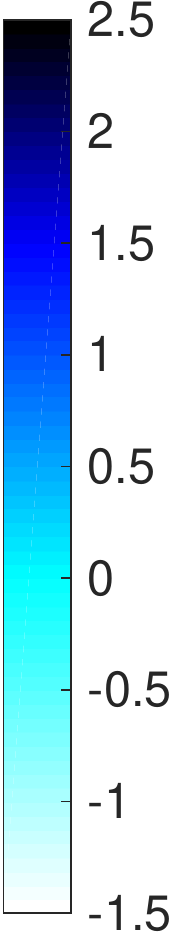}}
}
\caption{Example reconstructions for the CEM and a realization of the random conductivity/conductances F4/C2. Left:\ target log-conductivity with $\lVert \kappa \rVert_{L^2(\varOmega)} \approx 2.431$. Middle:\ reconstruction based on a single linearization of $\hat{\varLambda}_{\exp}$ with $L^2(\varOmega)$-error $1.637$. Right:\  reconstruction based on a single linearization of $\hat{L}$ with $L^2(\varOmega)$-error $0.550$.}
\label{fig:f4c2recos}
\end{figure}

\section{Discussion}
\label{sec:discussion}

We have reviewed existing linearization approaches for EIT and proposed a new logarithmic technique that seems to be the most accurate linearization method amongst those studied here.
Although the EIT forward operator is somewhat frequently linearized with respect to the conductivity, it was numerically demonstrated that the linearization error in the measurement matrix becomes smaller if the linearization is based on the resistivity or the logarithm of the conductivity.
In a sense, this is just a consequence of the Ohm's law, and the conclusion applies to both the continuum model and to the CEM.  Our novel method, which maps the logarithm of the conductivity to the logarithm of the Neumann-to-Dirichlet operator or the measurement matrix, produces the smallest linearization error in almost all tested cases.

Regarding the inverse problem of EIT, the proposed logarithmic method retains the positivity property of the log-conductivity parametrization, while preserving the accuracy of the resistivity linearization in the case of, e.g., constant conductivities. Numerical studies and example reconstructions demonstrate that when comparing to the traditional log-conductivity linearization approach, the proposed method is clearly more accurate, regardless of the parameters defining the lognormal random models for the conductivity and the contact conductances.

The above conclusions are valid if the measurements are modelled by the Neumann-to-Dirichlet map or its counterparts for electrode measurements. If the Dirichlet-to-Neumann map were employed, one would expect conductivity parametrizations to prevail over those based on the resistivity (cf.~Ohm's law). However, such a change would not affect the performance of the novel method since the logarithms of the Neumann-to-Dirichlet and Dirichlet-to-Neumann operators only differ by their sign.

\section*{Acknowledgments}
This work was supported by the Academy of Finland (decision 267789) and the Finnish Foundation for Technology Promotion TES.

\bibliographystyle{lineit}
\bibliography{lineit-refs}

\end{document}